\theoremstyle{plain}
\newtheorem{thm}{Theorem}[section]
\newtheorem{theorem}[thm]{Theorem}
\newtheorem{lemma}[thm]{Lemma}
\newtheorem{corollary}[thm]{Corollary}
\newtheorem{proposition}[thm]{Proposition}
\theoremstyle{definition}
\newtheorem{definition}[thm]{Definition}
\newtheorem{example}[thm]{Example}
\newtheorem{remark}[thm]{Remark}
\numberwithin{equation}{section}
\newcommand{\interior}[1]{{\kern0pt#1}^{\mathrm{o}}}
\begin{document}


\title{Zero Insertive Nil Clean Rings}

\author[Sanjiv Subba]{Sanjiv Subba $^\dagger$}

\address{$^\dagger$School of Applied Sciences\\ UPES, Dehradun\\  Bidholi 248007\\ India}
\email{sanjivsubba59@gmail.com}
\author[Tikaram Subedi]{Tikaram Subedi  {$^{\dagger *}$}}
\address{$^{\dagger * }$Department of Mathematics\\  National Institute Of Technology  Meghalaya\\ Shillong 793003\\ India}
\email{tikaram.subedi@nitm.ac.in}

\subjclass[2010]{16S50, 16S36, 16U80, 16U99}.

\keywords{Clean rings, nil clean rings, zero insertive nil clean rings}

\begin{abstract}
 This paper investigates key properties of ZINC rings and their relationships with semicommutative and weakly semicommutative rings. We call an element $x$ of a ring $R$ zero insertive if $x=arb$ for some $a,b,r\in R$ such that $ab=0$ and $ZI(R)$ denotes the set of all zero insertive elements of  $R$. We establish that a ring $R$ is semicommutative if and only if $ZI(R) \subseteq E(R)$, and weakly semicommutative if and only if $ZI(R) \subseteq N(R)$, where $E(R)$ and $N(R)$ denote respectively the sets of idempotent elements and nilpotent elements. For ZINC rings with no nontrivial idempotents, $ZI(R) \subseteq N(R)$.

We prove that a finite direct product of ZINC rings is ZINC if and only if each component ring is ZINC, while an infinite direct product may fail to be ZINC. For $n \geq 2$, if $M_n(R)$ is ZINC, then $R$ is weakly clean, however, the converse is not true (e.g., $\mathbb{Z}$). Additionally, $M_n(K)$ is ZINC for a division ring $K$ if and only if $K \cong \mathbb{F}_2$. We, also, present a ZINC ring whose polynomial and power series extensions are not ZINC.

\end{abstract}

\maketitle
\maketitle
	\section{Introduction}
	
 \quad In \cite{Diesl}, Diesl introduced the concept of nil clean rings, which has since sparked the extensive study among mathematicians (see \cite{2x2nc}, \cite{ncmr}, \cite{Kosan}). An element of a ring is said to be  nil clean if it can be expressed as the sum of an idempotent and a nilpotent element. A ring $R$ is said to be nil clean if all its elements are nil clean.

This paper is concerned with the study of nil clean decomposition of certain specific elements of a ring $R$. We call an element $a$ of a ring $R$ zero insertive  if it can be expressed as $xry$, where $x,y,r\in R$ with $xy=0$. Zero insertive elements play a crucial role in generalizations of semicommutative rings, for instance,  in semicommutative rings  (\cite{wscr}) all zero insertive elements are zero and in weakly  semicommutative rings (\cite{wscr}) all zero insertive elements are nilpotent. In this paper, we study rings in which all zero insertive elements exhibit nil clean decomposition and call these rings as zero insertive nil clean (ZINC) rings.  Among other results, we prove that for $n\geq 2$ and  a division ring $K$,  $M_n(K)$ is ZINC if and only if $K\cong \mathbb{F}_2$; and  for a ring $R$, $M_n(R)$ is a ZINC ring, only if $R$ is weakly clean. 

\quad In this paper, $R$ represents an associative ring with unity, and all modules  are unital. We adopt the following notations: $U(R)$ for the set of all units of $R$, $Z(R)$ for the set of all central elements of $R$, $E(R)$ for the set of all idempotent elements of $R$, $J(R)$ for the Jacobson radical of $R$, $N(R)$ for the set of all nilpotent elements of $R$, $T_n(R)$ for the ring of upper triangular matrices of order $n \times n$ over $R$, and $M_n(R)$ for the ring of all $n \times n$ matrices over $R$. Moreover, we use the notation $E_{ij}$ for the matrix unit in $M_n(R)$ whose $(i, j)^{th}$ entry is $1$ and zero elsewhere.
	
\section{Main Results}

At the outset,  we begin by introducing  \textit{zero insertive} elements. Subsequently, we exploit these elements to investigate a new class of rings called \textit{zero insertive nil clean (ZINC)} rings.

\begin{definition}
We call an element $x\in R$  \textit{zero insertive}, if $x=arb$ for some $a,b,r\in R$ with $ab=0$. We denote $ZI(R)=\{x\in R$: $x$~{is~a~zero~ insertive~element~of}~$R\}$.
  \end{definition}

\begin{proposition}\label{semZI(R)}
	A ring $R$ is semicommutative if and only if $ ZI(R)\subseteq E(R)$.
\end{proposition}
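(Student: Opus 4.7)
The plan is: the forward direction is immediate, since if $R$ is semicommutative then every $z = arb \in ZI(R)$ with $ab = 0$ equals $0 \in E(R)$, so $ZI(R) \subseteq \{0\} \subseteq E(R)$. For the converse, I would assume $ZI(R) \subseteq E(R)$, fix $a, b \in R$ with $ab = 0$ and an arbitrary $r \in R$, and set $e := arb$; the goal is then to show $e = 0$.

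My first task is to enrich the structure around $e$. Observe that $(ya)(bx) = y(ab)x = 0$ for every $x, y \in R$, so $yex = (ya)\,r\,(bx)$ lies in $ZI(R) \subseteq E(R)$, making $yex$ idempotent for all $x, y \in R$. Replacing $r$ by $-r$ also puts $-yex$ in $E(R)$, and comparing $(-yex)^2 = -yex$ with $(yex)^2 = yex$ forces $2\,yex = 0$; in particular $2e = 0$ and $2\,ey = 0$ for every $y$. Next, since $e + ey = 1 \cdot e \cdot (1+y) \in E(R)$, expanding $(e + ey)^2 = e + ey$ with $e^2 = e$, $(ey)^2 = ey$, and $2\,ey = 0$ reduces to $eye = ey$; a symmetric expansion of $(e + ye)^2 = e + ye$ gives $eye = ye$, so $ey = ye$ for all $y$, i.e.\ $e \in Z(R)$.

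With $e$ a central idempotent, the ring $eR$ inherits identity $e$, and I would show it is Boolean. Centrality together with $(ed)^2 = ed$ yields $ed^2 = ed$ for all $d \in R$; then expanding $e(d_1 + d_2)^2 = e(d_1 + d_2)$ and using 2-torsion gives $e\,d_1 d_2 = e\,d_2 d_1$. Hence every element of $eR$ is idempotent and $eR$ is commutative. Finally, in $eR$ we have $(ea)(eb) = e(ab) = 0$, so commutativity delivers
\[
e \;=\; e\cdot arb \;=\; (ea)(er)(eb) \;=\; (ea)(eb)(er) \;=\; 0,
\]
and hence $arb = 0$, showing $R$ is semicommutative.

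The main obstacle I anticipate is the centrality step: it is not obvious a priori why a single zero-insertive idempotent should commute with all of $R$, and the derivation rests on carefully exploiting the additive closure of $\{arb : r \in R\}$ inside $E(R)$ together with the forced 2-torsion. Once centrality is in hand, the reduction to a Boolean ring closes out the argument cleanly.
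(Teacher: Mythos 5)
Your proof is correct, but it takes a genuinely different and considerably longer route than the paper. The paper's argument is a three-line trick: with $xy=0$, the element $yxryx=(yx)r(yx)$ lies in $ZI(R)$ (since $(yx)(yx)=y(xy)x=0$) and is square-zero, hence is an idempotent nilpotent and therefore $0$; then $xry=(xry)^3=xr(yxryx)ry=0$. You instead fix a single $e=arb\in ZI(R)$ and extract much more structure: the observation that the whole two-sided "orbit" $ReR$ (more precisely all $yex=(ya)r(bx)$) sits inside $E(R)$, the $2$-torsion forced by replacing $r$ with $-r$, the centrality of $e$ obtained by expanding $(e+ey)^2$ and $(e+ye)^2$, and finally the Boolean commutative corner $eR$ in which $(ea)(eb)=e(ab)=0$ kills $e$. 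Every step checks out (in particular $e(1+y)=ar(b(1+y))$ with $a\cdot b(1+y)=0$ justifies $e+ey\in E(R)$, and $2yex=0$ is exactly what is needed to cancel the cross terms). What your approach buys is a structural picture --- under the hypothesis $ZI(R)\subseteq E(R)$, each zero-insertive element would be a central idempotent generating a Boolean ideal, which then collapses to $0$ --- whereas the paper's approach buys brevity by exploiting the single element $yxryx$ that is simultaneously idempotent and nilpotent. Both are complete proofs; yours is self-contained but could be compressed substantially by noticing the paper's shortcut.
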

\begin{proof}
	$(\Rightarrow)$. It is evident that in a semicommutative ring $R$, $ZI(R)=0$.\\ 
	$(\Leftarrow)$. Conversely, suppose that $ZI(R)\subseteq E(R)$. Let $x,y\in R$ be such that $xy=0$. For any $r\in R$,  $yxryx\in ZI(R)$. So by hypothesis and  $yxryx$ being a nilpotent,  $yxryx\in E(R)\cap N(R)$. Therefore,  $yxryx=0$. Since $xry\in ZI(R)\subseteq E(R)$, $xry=(xry)^3=xr(yxryx)ry=0$. Hence, $R$ is semicommutative.	
\end{proof}

Recall that $a$ ring $R$ is weakly semicommutative if for any $x,y\in R$, $xy=0$ implies $xRy\subseteq N(R)$. This definition can be rewritten in terms of $ZI(R)$ as follows:
\begin{definition}\label{wsemZI(R)}
	A ring $R$ is weakly semicommutative if and only if $ZI(R)\subseteq N(R)$.
\end{definition}

Proposition \ref{semZI(R)} and Definition \ref{wsemZI(R)} provide insights that motivate us to explore a generalization of semicommutative rings by incorporating the nil clean property. In this context, we introduce a new class of rings as follows:

\begin{definition}
	We call a ring $R$ \textit{zero insertive nil clean (ZINC)}  if $ZI(R)\subseteq E(R)+N(R)$.
\end{definition}

It is straightforward to note that weakly semicommutative rings are  ZINC. However, there exists a ZINC ring which is not weakly semicommutative. For instance, $M_2(\mathbb{Z}_2)$ is ZINC by Theorem \ref{dsnc}, but it is not weakly semicommutative as $E_{11}E_{12}E_{21}=E_{11}\notin N(M_2(\mathbb{Z}_2))$.

\begin{proposition}\label{wsn}
	Let $R$ be a  ZINC ring. If $R$ has no non trivial idempotent, then $ZI(R)\subseteq N(R)$.
\end{proposition}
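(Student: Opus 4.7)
The plan is to take any $x \in ZI(R)$ and use the ZINC hypothesis to write $x = e + n$ with $e^2 = e$ and $n \in N(R)$, then use the triviality of idempotents to rule out the case $e = 1$. Since $e \in \{0,1\}$, the case $e = 0$ already gives $x = n \in N(R)$, so the entire task reduces to showing that $e = 1$ is impossible, i.e., that $ZI(R)$ contains no unit when $R$ has only trivial idempotents.

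To rule out $e = 1$, I would argue by contradiction. If $e = 1$ then $x = 1 + n$ is a unit (standard geometric-series inverse of $1 + n$ for $n$ nilpotent). Writing $x = arb$ with $ab = 0$, the unit $x$ admits a two-sided inverse $y$, and then $a(rby) = 1$ while $(yar)b = 1$, so $a$ has a right inverse and $b$ has a left inverse. The key sublemma is that a ring with no nontrivial idempotents is Dedekind-finite: if $au = 1$, then $(ua)^2 = u(au)a = ua$ is idempotent, so $ua \in \{0,1\}$; the possibility $ua = 0$ collapses via associativity $a = (au)a = a(ua) = 0$, contradicting $au = 1$, so $ua = 1$ and $a$ is a unit. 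The same argument applied to $b$ (via $bv = (vb)^{-1}$-style reasoning) shows $b$ is a unit as well.

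Once $a$ and $b$ are both units, the relation $ab = 0$ immediately forces $b = a^{-1}(ab) = 0$, contradicting the invertibility of $b$. Hence $e = 1$ cannot occur, so $e = 0$ and $x = n \in N(R)$, completing the proof.

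The only mildly delicate step is the Dedekind-finiteness sublemma, but it is really a one-line observation once one notices the associativity trick $a = (au)a = a(ua)$; everything else is just assembling the pieces supplied by the ZINC hypothesis and the assumption on idempotents.
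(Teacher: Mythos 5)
Your proof is correct and follows essentially the same route as the paper: reduce to excluding the case $x=1+n\in U(R)$, observe that $(rbv)a$ (your $ua$) is an idempotent forcing $a$ to be a unit when $R$ has only trivial idempotents, and then derive the contradiction $b=a^{-1}(ab)=0$. The only difference is cosmetic — you package the key step as a Dedekind-finiteness sublemma and also verify that $b$ is a unit, which is redundant once $a$ is invertible.
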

\begin{proof}
	Let $x\in ZI(R)$. Then, $x=arb$ for some $a,b,r\in R$ and $ab=0$. Since $R$ is ZINC,  either  $x$ is nilpotent or $x=1+h$ for some $h\in N(R)$. If $x=1+h$, then $x\in U(R)$. So, $xv=arbv=1$ for some $v\in R$. Note that $rbva\in E(R)$, that is, $rbva=0$ or $1$. If $rbva=0$, then $a=0$, which implies $1+h=0$, a contradiction. If $rbva=1$, then $a$ is a unit. So, $b=0$, that is, $1+h=0$, a contradiction.
	
\end{proof}

\begin{proposition}
	Suppose $R$ is a ring with no non trivial idempotent. Then, $ZI(T_n(R))\subseteq E(T_n(R))+N(T_n(R))$ if and only if $ZI(T_n(R))\subseteq N(T_n(R))$.
\end{proposition}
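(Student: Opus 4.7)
The reverse implication is immediate: any $X\in N(T_n(R))$ is written as $0+X\in E(T_n(R))+N(T_n(R))$. So all the content is in the forward direction. There I would take $X\in ZI(T_n(R))$, write $X=ACB$ with $A,B,C\in T_n(R)$ and $AB=0$, and use the ZINC hypothesis to write $X=F+M$ with $F\in E(T_n(R))$ and $M\in N(T_n(R))$. The aim is to show $F=0$, which forces $X=M\in N(T_n(R))$.

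The main move is to pass to diagonals. Upper triangularity makes the diagonal map $A\mapsto (A_{11},\ldots,A_{nn})$ a ring homomorphism $T_n(R)\to R^n$, and applying it yields
\[
A_{ii}B_{ii}=(AB)_{ii}=0, \qquad X_{ii}=A_{ii}C_{ii}B_{ii}=F_{ii}+M_{ii}
\]
for every $i$, with $F_{ii}\in E(R)$ and $M_{ii}\in N(R)$ (the diagonal entries of an upper triangular nilpotent matrix are nilpotent in $R$, since $M^k=0$ implies $M_{ii}^k=0$). The hypothesis on $R$ now forces $F_{ii}\in\{0,1\}$.

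The critical step is to exclude $F_{ii}=1$. If this held, then $X_{ii}=1+M_{ii}$ would be a unit $u\in U(R)$, and the identity $A_{ii}\bigl(C_{ii}B_{ii}u^{-1}\bigr)=1$ would make $A_{ii}$ right-invertible. In a nonzero ring with only trivial idempotents, right-invertibility already forces two-sided invertibility (if $A_{ii}c=1$, then $cA_{ii}$ is an idempotent and, being distinct from $0$, must equal $1$). Then $A_{ii}B_{ii}=0$ would give $B_{ii}=0$, so $X_{ii}=A_{ii}C_{ii}B_{ii}=0$, contradicting $X_{ii}\in U(R)$.

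Consequently $F_{ii}=0$ for each $i$, so $F$ is strictly upper triangular, hence nilpotent; an idempotent nilpotent matrix is zero, so $F=0$ and $X=M\in N(T_n(R))$, as required. The only nontrivial point, and the one that actually exploits the assumption on $R$, is the small observation that right-invertibility collapses to invertibility when $E(R)=\{0,1\}$; the remainder of the argument is routine diagonal bookkeeping.
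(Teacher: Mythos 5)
Your proof is correct and follows essentially the same route as the paper: both reduce to the diagonal entries via the homomorphism $T_n(R)\to R^n$, note that the diagonal entries of the idempotent must be $0$ or $1$, and rule out $1$ by the observation that a one-sided inverse is two-sided when $E(R)=\{0,1\}$ (which is exactly the argument of the paper's Proposition on rings with no nontrivial idempotents). The only cosmetic difference is at the end, where you conclude $F=0$ directly from $F$ being a strictly upper triangular idempotent, while the paper instead deduces that each diagonal entry of $P$ is nilpotent; both finishes are valid.
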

\begin{proof}
	Suppose $ZI(T_n(R))\subseteq E(T_n(R))+N(T_n(R))$.	\\ Let $P=\begin{pmatrix}
		p_{11} &  p_{12}&  \dots &  p_{1n}\\
		0 &  p_{22} & \dots &  p_{2n}\\
		\vdots & \vdots& \ddots & \vdots \\
		0 & 0 & \dots &  p_{nn}
	\end{pmatrix}\vspace{0.2cm}\in ZI(T_n(R))$. This implies that each $p_{ii}\in ZI(R)$, $1\leq i\leq n$.  As $ZI(T_n(R))\subseteq E(T_n(R))+N(T_n(R))$, there exists $F=(f_{ij})\in E(T_n(R))$ such that $P-F\in N(T_n(R))$. By hypothesis, $f_{ii}=0$ or $1$, $1\leq i\leq n$. By the proof of Proposition \ref{wsn}, we get that $p_{ii}\in N(R)$, that is, $P\in N(T_n(R))$, that is, $ZI(T_n(R))\subseteq N(T_n(R))$. Whereas the converse is self-evident.                                                                                                    
\end{proof}

According to (\cite{sjc}), $a\in R$ is said to be strongly J-clean  if it can be expressed as the sum of an idempotent element and an element in its Jacobson radical that commute. A ring $R$ is said to be J-clean if for any $x\in R$, $x=e+j$ for some $e\in E(R), j\in J(R)$.

\begin{theorem}
	Let $R$ be a J-clean ring. If $J(R)$ is nil, then  $ZI(R)\subseteq N(R)$.
\end{theorem}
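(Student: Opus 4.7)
The plan is to reduce the problem modulo $J(R)$, where the J-clean hypothesis forces very nice structure.

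First, I would observe that J-cleanness makes $R/J(R)$ Boolean. Indeed, for any $x \in R$, write $x = e + j$ with $e \in E(R)$ and $j \in J(R)$; then $\bar{x} = \bar{e}$ in $R/J(R)$, so $\bar{x}^2 = \bar{e}^2 = \bar{e} = \bar{x}$. Thus every element of $R/J(R)$ is idempotent, which forces $R/J(R)$ to be a Boolean ring. In particular, $R/J(R)$ is commutative (standard: $(\bar{u}+\bar{v})^2 = \bar{u}+\bar{v}$ gives $\bar{u}\bar{v}+\bar{v}\bar{u}=0$, combined with characteristic $2$).

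Next, take an arbitrary $x \in ZI(R)$ and write $x = arb$ with $a,b,r \in R$ and $ab = 0$. Passing to $R/J(R)$, we have $\bar{a}\bar{b} = 0$. Since $R/J(R)$ is commutative, we may permute factors to get
\[
\bar{x} \;=\; \bar{a}\,\bar{r}\,\bar{b} \;=\; \bar{r}\,\bar{a}\,\bar{b} \;=\; 0,
\]
so $x \in J(R)$. Because $J(R)$ is assumed nil, $x \in N(R)$, which yields $ZI(R) \subseteq N(R)$.

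There is no real obstacle here; the only delicate point is verifying that J-cleanness genuinely forces $R/J(R)$ to be Boolean (and hence commutative), but this is immediate from the definition. Once this observation is made, the zero insertive relation $ab=0$ lifts trivially to a commuting situation modulo $J(R)$, and the nilness of $J(R)$ finishes the job.
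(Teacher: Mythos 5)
Your proof is correct, and it takes a genuinely different (and cleaner) route than the paper's. The paper argues entirely inside $R$: it writes $a=xry$ with $xy=0$, takes a J-clean decomposition $a=e+j$, and then shows $e=0$ by a chain of computations --- first using that $1-yx$ is a unit together with J-cleanness to force $yx\in J(R)$, hence $a^2\in J(R)$, then expanding $(a-e)^2$ to get $ae\in J(R)$, and finally $e=ae-je\in J(R)$, so $e=0$. Your argument instead makes the single structural observation that J-cleanness forces $R/J(R)$ to be Boolean (every $\bar{x}=\bar{e}$ is idempotent), hence commutative, so that $\bar{a}\bar{r}\bar{b}=\bar{r}\bar{a}\bar{b}=0$ and $ZI(R)\subseteq J(R)$ immediately; nilness of $J(R)$ then finishes. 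Both proofs in fact establish the same intermediate fact, $ZI(R)\subseteq J(R)$, but yours isolates the reason for it: what each approach buys is that the paper's is self-contained element-by-element bookkeeping, whereas yours exposes that any J-clean ring is semicommutative modulo its Jacobson radical --- a strictly more informative statement that makes the theorem transparent and would generalize to any hypothesis forcing $R/J(R)$ to be commutative (or merely semicommutative). The one point worth stating explicitly, which you do handle, is that the idempotent $\bar{e}$ in $R/J(R)$ is the image of an idempotent of $R$, so $\bar{x}^2=\bar{x}$ really does hold for every $\bar{x}$; the rest is standard.
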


\begin{proof}
	Let $a\in ZI(R)$. So,  $a=xry$  for some $x,y,r\in R$ with $xy=0$. By hypothesis, there exists $e\in E(R)$ such that $a-e\in J(R)$. If $e=0$, we are done. Assume, if possible, that $e\neq 0$. Since $yx\in N(R)$, $1-yx\in U(R)$. As $R$ is J-clean, $1-yx-e_1\in J(R)$ for some $e_1\in E(R)$. Therefore, $1-(1-yx)^{-1}e_1\in J(R)$. This yields that $(1-yx)^{-1}e_1\in U(R)$ and hence $e_1\in U(R)$, that is, $e_1=1$. This implies that $yx\in J(R)$. Hence, $a^2\in J(R)$. Observe that
	$(a-e)^2=a^2-ae-e(a-e)$.  So, $ae\in  J(R)$. Note that $a-e=j$ for some $j\in J(R)$. Hence $e=ae-je\in J(R)$, that is, $e=0$, a contradiction. Thus, $a\in J(R)$, that is, $ZI(R)\subseteq N(R)$.
\end{proof}

\begin{theorem}\label{nil}
	Let $R$ be a ring and $I$ be a nil ideal of $R$. If $R/I$ is  ZINC, then $R$ is  ZINC.
\end{theorem}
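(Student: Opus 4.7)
The plan is to pull a zero insertive element of $R$ down to $R/I$, apply the ZINC hypothesis there, and then lift the nil clean decomposition back to $R$ using the fact that nil ideals lift idempotents.

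First I would take an arbitrary $x\in ZI(R)$, so $x=arb$ with $ab=0$ for some $a,b,r\in R$. Passing to $R/I$, writing $\bar{\phantom{x}}$ for the canonical projection, we have $\bar{x}=\bar{a}\bar{r}\bar{b}$ with $\bar{a}\bar{b}=0$, hence $\bar{x}\in ZI(R/I)$. By hypothesis there exist $\bar{e}\in E(R/I)$ and $\bar{n}\in N(R/I)$ with $\bar{x}=\bar{e}+\bar{n}$.

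Next I would use the standard fact that idempotents lift modulo a nil ideal (which follows because if $f\in R$ satisfies $f^2-f\in I$, iterating $f\mapsto 3f^2-2f^3$ converges in finitely many steps since $I$ is nil). This produces an actual idempotent $e\in E(R)$ with $\bar{e}$ equal to our given idempotent image. Set $n:=x-e\in R$; then $\bar{n}=\bar{x}-\bar{e}$ is nilpotent in $R/I$, so $n^k\in I$ for some $k\geq 1$, and since $I$ is nil, $(n^k)^m=0$ for some $m$, whence $n$ itself is nilpotent in $R$.

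Therefore $x=e+n$ with $e\in E(R)$ and $n\in N(R)$, showing $ZI(R)\subseteq E(R)+N(R)$, i.e., $R$ is ZINC. The only genuine step is the idempotent lifting; everything else is routine, and the nilpotent lifting is automatic because we are already working modulo a nil ideal. I do not foresee any real obstacle.
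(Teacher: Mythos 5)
Your proposal is correct and follows essentially the same route as the paper's own proof: pass to $R/I$, use the ZINC hypothesis there, lift the idempotent modulo the nil ideal, and observe that $x-e$ is nilpotent because some power of it lands in the nil ideal $I$. The only addition is your explicit justification of idempotent lifting, which the paper simply cites as a standard fact.
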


\begin{proof}
	Suppose that $R/I$ is ZINC. Let $x\in ZI(R)$. Clearly, $\bar{x}\in ZI(R/I)$.  Since $R/I$ is  ZINC,  $\bar{x}=\bar{e}+\bar{h}$ for some $\bar{e}\in E(R/I), \bar{h}\in N(R/I)$. Since idempotents lifts modulo any nil ideal, $\bar{e}=\bar{f}$ for some $f\in E(R)$. Then $x-f$ is nilpotent modulo $I$. Thus, $(x-f)^m\in I\subseteq N(R)$ for some positive integer $m$. So, $x-f\in N(R)$. Therefore, $R$ is  ZINC.
\end{proof}

\begin{proposition}\label{prod}
	Let $\{R_{\alpha} \}$ be a finite collection of rings. Then the direct product $R=\Pi R_{\alpha}$  is  ZINC if and only if each $R_{\alpha}$ is  ZINC.
\end{proposition}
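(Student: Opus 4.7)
The plan is to prove both implications by coordinate-wise analysis, using that idempotents and nilpotents in a direct product are exactly the tuples whose components are idempotent (resp. nilpotent) in each factor.

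For the forward direction, I would fix an index $\beta$ and pick $x_\beta \in ZI(R_\beta)$, witnessed by $x_\beta = a_\beta r_\beta b_\beta$ with $a_\beta b_\beta = 0$. I would then lift to $R$ by inserting $x_\beta, a_\beta, r_\beta, b_\beta$ into the $\beta$-coordinate and taking zeros elsewhere, obtaining an element $x \in ZI(R)$. Applying the ZINC hypothesis gives $x = e + n$ with $e = (e_\alpha) \in E(R)$, $n = (n_\alpha) \in N(R)$; projection to the $\beta$-coordinate preserves idempotence and nilpotence, yielding $x_\beta = e_\beta + n_\beta$ in the required form.

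For the reverse direction, I would take $x = (x_\alpha) \in ZI(R)$ with witnesses $a = (a_\alpha)$, $b = (b_\alpha)$, $r = (r_\alpha)$ satisfying $ab=0$. Componentwise this yields $x_\alpha = a_\alpha r_\alpha b_\alpha$ with $a_\alpha b_\alpha = 0$, so each $x_\alpha \in ZI(R_\alpha)$. The ZINC hypothesis on each $R_\alpha$ then provides $e_\alpha \in E(R_\alpha)$, $n_\alpha \in N(R_\alpha)$ with $x_\alpha = e_\alpha + n_\alpha$, and assembling these coordinates gives $e := (e_\alpha) \in E(R)$ together with a candidate nilpotent $n := (n_\alpha)$.

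The only subtlety, and the step where finiteness is essential, is verifying that $n \in N(R)$. Each $n_\alpha$ has a nilpotency index $m_\alpha$, and because the index set is finite the maximum $M := \max_\alpha m_\alpha$ is finite, so $n^M = 0$. This is the one place the hypothesis of finiteness enters, and it is precisely what fails in the infinite case, in line with the authors' remark in the abstract that an infinite product of ZINC rings may fail to be ZINC.
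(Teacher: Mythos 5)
Your proof is correct and is exactly the coordinate-wise argument the paper leaves implicit (its proof reads only ``The proof is trivial''). You rightly isolate the one nontrivial point---that finiteness is needed to bound the nilpotency index of the assembled element $n=(n_\alpha)$---which is precisely what fails in the paper's Example of an infinite product that is not ZINC.
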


\begin{proof}
	The proof is trivial.	
\end{proof}

However, it is possible to  construct an infinite direct product of ZINC rings that is not  ZINC, as illustrated in the following Example \ref{infinite pdt}. The following remark will be referred in Example \ref{infinite pdt} and other subsequent results.

\begin{remark}\label{ZINC rem}
	For any $w\in R$, observe that $wE_{11}=\begin{pmatrix}
		w & 0 & \dots & 0\\
		0 & 0 & \dots & 0 \\
		\vdots & \vdots & \ddots & \vdots \\
		0 & 0 & \dots & 0
	\end{pmatrix}\in ZI(M_n(R))$ as $wE_{11}=E_{1n}(wE_{n1})E_{11}$ for all $n\geq 2$.
	
\end{remark} 

\begin{example}\label{infinite pdt}
	It is easy to observe that $\mathbb{Z}_{2^m}$ is nil clean for all positive integer $m$. By \cite[Corollary 7]{ncmr}, $M_n(\mathbb{Z}_{2^{m}})$ is nil clean for any positive intergers $n$ and $m$. Now, take $R=M_2(\mathbb{Z}_{2})\times M_3(\mathbb{Z}_{2^{2}})\times M_4(\mathbb{Z}_{2^{3}})\times \cdots=\Pi_{i=1}^{\infty} M_{i+1}(\mathbb{Z}_{2^{i}})$. Let $P=(2E_{11}, 2E_{11},\dots)$. By Remark \ref{ZINC rem}, $P\in ZI(R)$. Note that $J(\Pi_{i=1}^{\infty} M_{i+_1}(\mathbb{Z}_{2^{i}}))=\Pi_{i=1}^{\infty} M_{i+_1}(J(\mathbb{Z}_{2^{i}}))$. Let $K_i=2E_{11}\in M_{i+_1}(\mathbb{Z}_{2^{i}})$. Then $K_i\in J( M_{i+_1}(\mathbb{Z}_{2^{i}}))$ as $2\in J(\mathbb{Z}_{2^i})$. Suppose $K_i=F+B$, where $F$ is an idempotent and $B$ is a nilpotent element. Let $V=I+B\in U(M_{i+1}(\mathbb{Z}_{2^i}))$. So, $V=(I-F)+K_i$. This implies that $V=V(I-F)V^{-1}+VKV^{-1}$. So, we obtain $V(I-K_iV^{-1})=V(I-F)V^{-1}\in E(M_{i+1}(\mathbb{Z}_{2^i}))\cap U(M_{i+1}(\mathbb{Z}_{2^i}))$. Therefore, $F=0$. Note that $K_i$ is nilpotent of index $i$. Hence, $P\notin E(R)+N(R)$  as $P=(K_1, K_2,\dots)$. Thus, $R$ is not a  ZINC ring.   
\end{example}

The following corollary is a  consequence of Theorem \ref{nil} and Proposition \ref{prod}.

\begin{corollary} 
	\begin{enumerate}
		\item  $T_n(R)$ is  ZINC if and only if $R$ is  ZINC.
		\item Let $M$ be an $(R,R)$-bimodule. The trivial extension $R\propto M$ is ZINC if and only if $R$ is ZINC.
		\item $R[x]/\langle x^n \rangle$ is ZINC if and only if $R$ is ZINC.

	\end{enumerate}
\end{corollary}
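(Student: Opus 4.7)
The plan is to treat all three parts by the same two-pronged strategy: use Theorem \ref{nil} (together with Proposition \ref{prod} in case (1)) for the ``if'' direction, and a lift-and-project argument for the converse.

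For the ``if'' direction I will exhibit, in each case, a nil ideal $I$ of the larger ring $S$ whose quotient is built from $R$. In (1), let $I\subseteq T_n(R)$ be the ideal of strictly upper triangular matrices, which is nilpotent; then $T_n(R)/I\cong R^n$, so Proposition \ref{prod} gives that $R^n$ is ZINC whenever $R$ is, and Theorem \ref{nil} then lifts ZINC-ness to $T_n(R)$. In (2), the ideal $I=0\propto M$ satisfies $I^2=0$ and $(R\propto M)/I\cong R$. In (3), the ideal $I=\langle x\rangle/\langle x^n\rangle$ is nilpotent of index $n$ and the quotient is again $R$. In every case Theorem \ref{nil} closes the argument.

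For the converse I will use a surjective ring homomorphism $\pi\colon S\to R$ that kills the ideal just described: projection onto the $(1,1)$-entry of $T_n(R)$ in (1), projection $(r,m)\mapsto r$ in (2), and evaluation $x\mapsto 0$ in (3). Because ZINC is not known to descend along arbitrary surjections, the core step is to check that every $a\in ZI(R)$ admits an explicit lift $\tilde a\in ZI(S)$ with $\pi(\tilde a)=a$. Writing $a=xry$ with $xy=0$, I will take $\tilde a=aE_{11}=(xE_{11})(rE_{11})(yE_{11})$ in (1) (as in Remark \ref{ZINC rem}), $\tilde a=(a,0)=(x,0)(r,0)(y,0)$ in (2), and $\tilde a=a$ viewed as the constant polynomial $x\cdot r\cdot y$ in (3). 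Once $S$ is known to be ZINC, one has $\tilde a=F+H$ for some $F\in E(S)$ and $H\in N(S)$; applying $\pi$ yields $a=\pi(F)+\pi(H)\in E(R)+N(R)$, and hence $R$ is ZINC. The main obstacle, and essentially the entire content of the converse, is producing these lifts, since the ZINC property is a statement about a specific set of witnesses rather than about the ring-theoretic image as a whole.
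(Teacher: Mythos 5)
Your proposal is correct and follows essentially the same route as the paper: Theorem \ref{nil} (plus Proposition \ref{prod} for $T_n(R)$) applied to the obvious nil ideal in one direction, and lifting a zero insertive element of $R$ to one of the extension ring and projecting its nil clean decomposition back down in the other. The only cosmetic difference is that you handle (2) and (3) directly via the nil ideals $0\propto M$ and $\langle x\rangle/\langle x^n\rangle$, whereas the paper routes them through their upper triangular matrix representations and part (1).
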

\begin{proof}
	For  $(1)$: Suppose $R$ is  ZINC. Observe that $I=\begin{pmatrix}
		0 & R & \dots & R\\
		0 & 0 & \dots & R \\
		\vdots & \vdots & \ddots & \vdots \\
		0 & 0 & \dots & 0
	\end{pmatrix}$ is a nil ideal of $ T_n(R)$.  Note that $T_n(R)/I\cong R\times \dots \times R$.
	By Theorem \ref{nil} and Proposition \ref{prod}, $T_n(R)$ is ZINC. Conversely, suppose $T_n(R)$ is ZINC.  If $x\in ZI(R)$, then $A=\begin{pmatrix}
		x & \mathbf{0}\\
		\mathbf{0} & \mathbf{0}
	\end{pmatrix}\in ZI(T_n(R))$. Since $T_n(R)$ is ZINC, $x\in E(R)+N(R)$.
	The proofs of $(2)$ and $(3)$ follows analogously as $R\propto M\cong \left\{\left(\begin{array}{rr}
		t & s\\
		0 & t
	\end{array}
	\right) : t\in R, s\in M \right\}$ and  $R[x]/<x^n>\cong  
	\left \lbrace
	\left(\begin{array}{lccccr}
		r_1 & r_{2} & r_3 & \dots & r_{n-1} & r_n\\
		0 & r_1 & r_2 & \dots &  r_{n-2} & r_{n-1}\\
		\vdots & \vdots  & \vdots & \vdots & \vdots \\
		0 & 0 & 0 & \dots & r_1 & r_2 \\
		0 & 0 & 0 & \dots & 0 & r_1
	\end{array}
	\right ):r_i\in R\right \rbrace$ .
\end{proof}

\begin{lemma}\label{nztc}
	If $R$ is ZINC, then $ZI(R)\subseteq E(R)+U(R)$.
\end{lemma}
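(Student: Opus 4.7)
My strategy will be to apply the ZINC hypothesis not to $x$ itself but to $-x$, and then rewrite the resulting decomposition. The preparatory observation is that $ZI(R)$ is closed under negation: if $x = arb$ with $ab = 0$, then $-x = (-a)rb$ with $(-a)b = 0$, so $-x \in ZI(R)$.

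Given $x \in ZI(R)$, I will apply the ZINC hypothesis to $-x$ to obtain $-x = e + n$ for some $e \in E(R)$ and $n \in N(R)$. Then $x = -e - n$, which I rewrite as $x = (1-e) + \bigl(-(1+n)\bigr)$. Since $1-e$ is an idempotent and $n$ is nilpotent, $1+n$ is a unit (with inverse $\sum_{k\geq 0}(-n)^k$), so $-(1+n)$ is also a unit. Hence $x \in E(R) + U(R)$, as required.

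I do not anticipate any real obstacle. The only subtle point is recognizing that one should apply ZINC to $-x$ rather than to $x$ directly: if one writes $x = e + n$ and tries the natural rewriting $x = (1-e) + (2e - 1 + n)$, one is forced to show that $2e - 1 + n$ is a unit, which in a noncommutative ring would require commutativity between $e$ and $n$ that is not available here. Working with $-x$ sidesteps this by producing an expression in which the unit part is simply $-(1+n)$, whose invertibility follows immediately from the nilpotence of $n$.
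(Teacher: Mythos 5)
Your proof is correct and is essentially identical to the paper's: both apply the ZINC decomposition to $-x$ and rewrite $x=(1-e)+(-1-n)$, with $-1-n$ a unit because $n$ is nilpotent. No issues.
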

\begin{proof}
	Let $x\in ZI(R)$. Clearly, $-x\in ZI(R)$. As $R$ is ZINC, $-x=e+h$ for some $e\in E(R)$  and $h\in N(R)$. Therefore, $x=(1-e)+(-h-1)\in E(R)+U(R)$.	
\end{proof}

We skip the proof of the following trivial Lemma. 
\begin{lemma}\label{invertible}
	Let $R$ be a ring. Then,   $U=\begin{pmatrix}
		1 & u_{12} & \dots & u_{1n}\\
		0 & 1 & \dots & u_{2n} \\
		\vdots & \vdots & \ddots & \vdots \\
		0 & 0 & \dots & 1
	\end{pmatrix}, V=\begin{pmatrix}
	1 & 0 & \dots & 0\\
	v_{21} & 1 & \dots & 0 \\
	\vdots & \vdots & \ddots & \vdots \\
	v_{n1} & v_{n2} & \dots & 1
\end{pmatrix}\in M_n(R)$ are invertible.
\end{lemma}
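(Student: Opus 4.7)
The plan is to exploit the fact that each of $U$ and $V$ is a sum of the identity and a strictly triangular matrix, and strictly triangular matrices in $M_n(R)$ are nilpotent with nilpotency index at most $n$. Since $1 + x$ is invertible whenever $x$ is nilpotent (with inverse given by the geometric series $1 - x + x^2 - \cdots$, which truncates because $x^n = 0$), this immediately yields invertibility.

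More concretely, I would first write $U = I + N$ where $N$ is the strictly upper triangular matrix whose $(i,j)$-entry is $u_{ij}$ for $i < j$ and $0$ otherwise. A short induction (or direct computation on matrix units $E_{ij}$ with $i<j$) shows $N^n = 0$, so
\[
U^{-1} = I - N + N^2 - \cdots + (-1)^{n-1} N^{n-1},
\]
and this is easily verified by expanding $U \cdot U^{-1}$ and collecting telescoping terms. Symmetrically, write $V = I + M$ where $M$ is strictly lower triangular with entries $v_{ij}$ for $i > j$, note $M^n = 0$, and obtain $V^{-1} = \sum_{k=0}^{n-1} (-1)^k M^k$.

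There is no real obstacle here; the only thing to verify carefully is that strictly upper (resp.\ lower) triangular matrices over an arbitrary ring $R$ are nilpotent of index at most $n$, which follows from the observation that $E_{ij} E_{kl} = \delta_{jk} E_{il}$ together with the fact that a product of $n$ matrix units $E_{i_1 j_1} \cdots E_{i_n j_n}$ with $i_r < j_r$ must eventually force an index mismatch. This is exactly why the authors dismiss it as trivial, so the proposed proof is essentially a one-line invocation of the nilpotent-plus-identity inversion formula.
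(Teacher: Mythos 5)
Your proof is correct, and it is precisely the standard argument the authors have in mind when they declare the lemma trivial and skip its proof entirely: writing each matrix as $I+N$ with $N$ strictly triangular, noting $N^n=0$ over any ring since a nonzero entry of $N^k$ would require a strictly monotone chain of $k+1$ indices in $\{1,\dots,n\}$, and inverting via the truncated geometric series. Nothing further is needed.
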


\begin{lemma}\label{ZINCmatrix}
	Suppose $X=\begin{pmatrix}
		1 & u_{12} & \dots & u_{1n}\\
		0 & 1 & \dots & u_{2n} \\
		\vdots & \vdots & \ddots & \vdots \\
		0 & 0 & \dots & 1
	\end{pmatrix}$, $Y=\begin{pmatrix}
		1 & 0 & \dots & 0\\
		-x_2 & 1 & \dots & 0 \\
		\vdots & \vdots & \ddots & \vdots \\
		-x_n & 0 & \dots & 1
	\end{pmatrix}$ and $U=(u_{ij})\in M_n(R)$. If $XYU=(b_{ij})$, then $b_{1j}=(1-\sum\limits_{i=2}^nu_{1i}x_i)u_{1j}+ \sum\limits_{k=2}^nu_{1k}u_{kj}$, when $j\geq 1$ and $	b_{ij}=(-x_i-\sum\limits_{k=i+1}^nu_{ik}x_k)u_{1j}+u_{ij}+\sum\limits_{k=i+1}^nu_{ik}u_{kj}$, when $i> 1$, $j\geq 1$.
\end{lemma}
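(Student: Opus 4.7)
The plan is to carry out the matrix product in two stages, first forming $XY$ and then multiplying on the right by $U$, exploiting the very special structures of $X$ (upper unitriangular) and $Y$ (lower triangular with nonzero off‑diagonal entries only in the first column). No ring‑theoretic content is needed; the lemma is a bookkeeping identity.

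\smallskip

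\textbf{Step 1 (compute $XY$).} Since $Y_{kj}=\delta_{kj}$ for every $j\ge 2$, the right‑multiplication by $Y$ is a single column operation that alters only the first column of $X$. Hence
\[
(XY)_{ij}=X_{ij}\qquad\text{for all }j\ge 2,
\]
which equals $1$ if $i=j$, $u_{ij}$ if $i<j$, and $0$ if $i>j$. For the first column, using $Y_{11}=1$ and $Y_{k1}=-x_k$ for $k\ge 2$, together with $X_{ii}=1$, $X_{ik}=u_{ik}$ for $k>i$, and $X_{ik}=0$ for $k<i$, I obtain
\[
(XY)_{11}=1-\sum_{k=2}^{n}u_{1k}x_{k},\qquad
(XY)_{i1}=-x_{i}-\sum_{k=i+1}^{n}u_{ik}x_{k}\ \ (i>1).
\]

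\smallskip

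\textbf{Step 2 (multiply by $U$).} For each $i,j$, split
\[
b_{ij}=(XYU)_{ij}=(XY)_{i1}\,u_{1j}+\sum_{k\ge 2}(XY)_{ik}\,u_{kj}.
\]
Substituting the values from Step 1 gives the first column contribution $(XY)_{i1}u_{1j}$ exactly, while for $k\ge 2$ the entry $(XY)_{ik}=X_{ik}$ vanishes when $k<i$, equals $1$ when $k=i$, and equals $u_{ik}$ when $k>i$. The remaining sum therefore collapses to $\sum_{k=2}^{n}u_{1k}u_{kj}$ in the row $i=1$ case, and to $u_{ij}+\sum_{k=i+1}^{n}u_{ik}u_{kj}$ in the row $i>1$ case. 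Assembling the two pieces reproduces the two displayed formulas for $b_{1j}$ and $b_{ij}$.

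\smallskip

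\textbf{Obstacles.} There is no real obstacle; the only delicate point is to keep the index conventions straight, in particular that the index $i$ appearing in the displayed formula for $b_{1j}$ is the dummy summation index inherited from the first column of $Y$ (equivalently, it can be renamed to $k$). Once the first column of $XY$ is recorded separately, the rest is a direct expansion.
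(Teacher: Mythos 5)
Your computation is correct: the paper states Lemma \ref{ZINCmatrix} without proof, and your two-stage expansion (first recording that right-multiplication by $Y$ only modifies the first column of $X$, then expanding $(XY)U$ and splitting off the $k=1$ term) reproduces both displayed formulas exactly, with the multiplication order kept correctly for a noncommutative ring. This is precisely the routine verification the paper leaves to the reader, so there is nothing to compare beyond noting that your index bookkeeping (including the observation that the $i$ in the formula for $b_{1j}$ is a dummy index) is accurate.
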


A ring $R$ is said to be weakly clean (\cite{wclean}) if for any $x\in R$ there exist $e\in E(R), u\in U(R)$ such that $x-e-u\in (1-e)Rx$.

\begin{theorem}\label{weaklyc}
	For any $n\geq 2$, if $M_n(R)$ is a ZINC ring, then $R$ is weakly clean ring.
\end{theorem}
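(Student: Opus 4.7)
The plan is to lift $x$ to $xE_{11} \in M_n(R)$, apply the ZINC hypothesis there, and then descend back to $R$ via a careful matrix manipulation using Lemma \ref{ZINCmatrix}. For $x \in R$, Remark \ref{ZINC rem} places $xE_{11}$ in $ZI(M_n(R))$. Combining this with the ZINC hypothesis and Lemma \ref{nztc} yields an idempotent $F = (f_{ij})$ and a unit $W \in M_n(R)$ with $xE_{11} = F + W$; so $w_{11} = x - f_{11}$, $w_{ij} = -f_{ij}$ for $(i,j) \neq (1,1)$, and $F^2 = F$ supplies the bilinear relations $\sum_k f_{ik}f_{kj} = f_{ij}$, which will be used repeatedly.

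The invertibility of $W$ makes the first column $(x - f_{11}, -f_{21}, \ldots, -f_{n1})^T$ left-unimodular: letting $V := W^{-1}$, the equation $(VW)_{1j} = \delta_{1j}$ gives $v_{11}x = 1 + \eta$ where $\eta := \sum_k v_{1k} f_{k1}$, while $\sum_k v_{1k} f_{kj} = 0$ for $j \geq 2$. Combining these with $F^2 = F$ forces $\eta f_{11} = \eta$ and $\eta f_{1j} = 0$ for $j \geq 2$. A symmetric argument on the first column of $V$ yields $\zeta := x v_{11} - 1$ with $f_{11}\zeta = \zeta$ and $f_{i1}\zeta = 0$ for $i \geq 2$.

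I would then apply Lemma \ref{ZINCmatrix} with $U := W$, choosing parameters $x_2, \ldots, x_n \in R$ (built from the entries of $V$ and telescoped via the idempotence identities) so that all entries $(XYW)_{i1}$ for $i \geq 2$ land in $(1-e)Rx$ for an honest idempotent $e \in E(R)$ extracted from $F$. Since $XY$ is invertible by Lemma \ref{invertible}, the matrix $XYW$ is invertible in $M_n(R)$; block-triangular invertibility then forces the $(1,1)$-entry to have the form $u + e$ with $u \in U(R)$. Reading this off as an $R$-level identity $sx - e = u$ with $es = e$, rearrangement gives $x - e - u = (1-s)x$, and $es = e$ implies $1 - s \in (1-e)R$, whence $(1-s)x \in (1-e)Rx$, the desired weakly clean decomposition.

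The main obstacle is producing a genuine idempotent of $R$: neither $f_{11}$ nor $f_{11}^2$ is generally idempotent, because $f_{11}^2 = f_{11} - \sum_{k \geq 2} f_{1k}f_{k1}$. One must cleverly absorb the defect $\sum_{k \geq 2} f_{1k}f_{k1}$ into the $(1-e)Rx$ error term by the specific choice of the $x_i$'s. The noncommutativity of $R$ prohibits naive inversion of $x - f_{11}$, so the patching has to be carried out entirely by ring operations—telescoping the identities $\sum_k f_{ik}f_{kj} = f_{ij}$ against the unimodularity of the first column of $W$, coordinated simultaneously on the left (via $\eta$) and right (via $\zeta$).
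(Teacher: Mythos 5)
Your setup coincides with the paper's: lift $w$ to $wE_{11}\in ZI(M_n(R))$ via Remark \ref{ZINC rem}, use the ZINC hypothesis together with Lemma \ref{nztc} to write $wE_{11}=F+V$ with $F$ idempotent and $V$ a unit, and then try to descend to $R$ by triangularizing with Lemmas \ref{invertible} and \ref{ZINCmatrix}. The relations you derive from $VW^{-1}=I$ and $F^2=F$ (e.g.\ $\eta f_{11}=\eta$, $\eta f_{1j}=0$) are correct but do not get you to the goal, and you candidly flag the real issue yourself: nothing in your argument produces an honest idempotent $e\in E(R)$, nor a reason why the error term lands in $(1-e)Rw$. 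The phrases ``choosing parameters $x_2,\dots,x_n$ \dots telescoped via the idempotence identities'' and ``one must cleverly absorb the defect $\sum_{k\geq 2}f_{1k}f_{k1}$'' are placeholders for the step that actually constitutes the proof; as written, the argument has a genuine gap precisely there.

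The paper closes this gap with a conjugation trick you are missing. From $V^{-1}W=GV^{-1}+I$ with $G=V^{-1}FV$ one gets $(I-G)V^{-1}W=I-G$; since $W=wE_{11}$, the left-hand side is supported on the first column, so $I-G$ itself has the form with first column $(g,x_2,\dots,x_n)^{T}$ and zeros elsewhere. Idempotence of such a matrix immediately forces $g\in E(R)$ and $x_i\in Rg$ --- that is where the honest idempotent of $R$ comes from, with no need to repair $f_{11}$. The explicit shape of $G$ then yields the scalar relations (\ref{ZINC1})--(\ref{ZINC5}); the triangular reductions show $b_{11}=u_{11}+\sum_{i\geq 2}u_{1i}u_{i1}w$ is a unit of $R$ (it is itself the unit, not of the form $u+e$ as you suggest); and substituting back into $fu_{11}+1=u_{11}w$ gives $w-e-v=(1-e)rw^2\in(1-e)Rw$ with $e=b_{11}^{-1}fb_{11}$ and $v=b_{11}^{-1}$. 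Without the observation that $(I-G)V^{-1}W=I-G$ concentrates $I-G$ in the first column, your plan cannot be completed as stated.
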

\begin{proof}
	Let $w\in R$ and $ W=wE_{11}\in M_n(R)$. By Remark \ref{ZINC rem}, $W\in ZI(M_n(R))$. Since  $M_n(R)$ is ZINC,   $W=F+V$ for some $F\in E(M_n(R)$ and $V\in U(M_n(R))$. This implies that $V^{-1}W=V^{-1}F+I$, where $I$ is the identity matrix. So, we obtain \vspace{0.2cm} $V^{-1}W=(V^{-1}FV)V^{-1}=GV^{-1}+I$, where  $G=V^{-1}FV$ which is also idempotent. Hence, $(I-G)V^{-1}W=I-G$. As $W=wE_{11}$, $I-G=\begin{pmatrix}
		g & 0 & \dots & 0\\
		x_2 & 0 & \dots & 0 \\
		\vdots & \vdots & \ddots & \vdots \\
		x_n & 0 & \dots & 0
	\end{pmatrix}\vspace{0.2cm}$ for some $x_i,g\in R$, $2\leq i\leq n$.  Since $I-G$ is idempotent,   $g\in E(R), x_i\in Rg$, and  $i>1$. So,
	
	\begin{equation}\label{ZINCG}
		G=\begin{pmatrix}
			f & 0 & \dots & 0 & 0\\
			-x_2 & 1 & \dots & 0 & 0 \\
			\vdots & \vdots & \ddots & \vdots & \vdots \\
			-x_{n-1} & 0 & \dots & 1 & 0\\
			-x_n & 0 & \dots & 0 &  1
		\end{pmatrix}
	\end{equation}
	where $f=1-g$.
	Let $V^{-1}=(u_{ij})$. Since $V^{-1}W=GV^{-1}+I$ \vspace{0.2cm}, \\ $\begin{pmatrix}
		u_{11}w & 0 & \dots & 0\\
		u_{21}w & 0 & \dots & 0 \\
		\vdots & \vdots & \ddots & \vdots \\
		u_{n1}w & 0 & \dots & 0
	\end{pmatrix}=\begin{pmatrix}
		fu_{11}+1 & fu_{12} & \dots & fu_{1n}\\
		-x_2u_{11}+u_{21} & -x_2u_{12}+u_{22}+1 & \dots & -x_2u_{1n}+u_{2n} \\
		\vdots & \vdots & \ddots & \vdots \\
		-x_nu_{11}+u_{n1} & -x_nu_{12}+u_{n2} & \dots & -x_nu_{1n}+u_{nn}+1
	\end{pmatrix}\vspace{0.2cm}$ (see equation \ref{ZINCG}). So, we obtain the following equalities \ref{ZINC1}, \ref{ZINC2}, \ref{ZINC3}, \ref{ZINC4} and \ref{ZINC5} :
	
	\begin{equation}\label{ZINC1}
		fu_{11}+1=u_{11}w
	\end{equation} For $j>1$, we have
	\begin{equation}\label{ZINC2}
		fu_{1j}=0=(1-g)u_{1j}	
	\end{equation} For $1<j\leq n$, we have 
	\begin{equation}\label{ZINC3}
		-x_ju_{1j}+u_{jj}=-1	
	\end{equation} For $1<i\leq n$, we have
	\begin{equation}\label{ZINC4}
		-x_iu_{11}+u_{i1}=u_{i1}w	
	\end{equation}
	For all $i\neq j$,  $1<i,j\leq n$ 
	
	\begin{equation}\label{ZINC5}
		-x_iu_{1j}+u_{ij}=0	
	\end{equation}
	
	If $B=\begin{pmatrix}
		1 & u_{12} & \dots & u_{1n}\\
		0 & 1 & \dots & u_{2n} \\
		\vdots & \vdots & \ddots & \vdots \\
		0 & 0 & \dots & 1
	\end{pmatrix}\begin{pmatrix}
		1 & 0 & \dots & 0\\
		-x_2 & 1 & \dots & 0 \\
		\vdots & \vdots & \ddots & \vdots \\
		-x_n & 0 & \dots & 1
	\end{pmatrix}V^{-1}$, then   $B$  is invertible (see Lemma \ref{invertible}). Let $B=(b_{ij})$. We claim that $b_{11}\in U(R)$. Now, for determining $b_{ij}$, we use the Lemma \ref{ZINCmatrix} freely. So, we get

	$\begin{aligned}
		b_{11} & =(1-\sum\limits_{i=2}^nu_{1i}x_i)u_{11}+ \sum\limits_{k=2}^nu_{1k}u_{k1}\\ 
		& =u_{11}+u_{12}(-x_2u_{11}+u_{21})+u_{13}(-x_3u_{11}+u_{31})+\dots +u_{1n}(-x_nu_{11}+u_{n1})
	\end{aligned}$
	
	Therefore, by equation \ref{ZINC4}
	\begin{equation}\label{ZINC6}
		b_{11}=u_{11}+\sum\limits_{i=2}^nu_{1i}u_{i1}w	
	\end{equation} For $j> 1$, we have
	
	$\begin{aligned}
		b_{1j} & =(1-\sum\limits_{i=2}^nu_{1i}x_i)u_{1j}+ \sum\limits_{k=2}^nu_{1k}u_{kj}\\
		&=u_{1j}+u_{12}(-x_2u_{1j}+u_{2j})+\dots +u_{1j}(-x_ju_{1j}+u_{jj})+\dots +u_{1n}(-x_nu_{1j}+u_{nj})	
	\end{aligned}$
	Therefore, by equation  \ref{ZINC3}, equation \ref{ZINC5}, for $j>1$
	
	\begin{equation}\label{ZINC7}
		b_{1j}=0
	\end{equation}

	For $1<j<i$, 
	
	$\begin{aligned}
		b_{ij}& =(-x_i-\sum\limits_{k=i+1}^nu_{ik}x_k)u_{1j}+u_{ij}+\sum\limits_{k=i+1}^nu_{ik}u_{kj}\\
		&=-x_iu_{1j}+u_{ij}+\sum\limits_{k=i+1}^nu_{ik}(-x_ku_{1j}+u_{kj}).	
	\end{aligned}$ \\
	
	So, by equation \ref{ZINC5},  for $1<j<i$
	\begin{equation}
		b_{ij}=0
	\end{equation}
	
	For $i\neq 1$, we have
	
	$\begin{aligned}
		b_{ii} &=(-x_i-\sum\limits_{k=i+1}^nu_{ik}x_k)u_{1i}+u_{ii}+\sum\limits_{k=i+1}^nu_{ik}u_{ki}\\
		& =-x_iu_{1i}+u_{ii}+\sum\limits_{k=i+1}^nu_{ik}(-x_ku_{1i}+u_{ki}).
	\end{aligned}$

	Therefore, by equation \ref{ZINC3} and equation \ref{ZINC5}, for $i\neq 1$
	\begin{equation}
		b_{ii}=-1
	\end{equation}
	
	For $1<i<j$,
	
	$\begin{aligned}
		b_{ij} &=(-x_i-\sum\limits_{k=i+1}^nu_{ik}x_k)u_{1j}+u_{ij}+\sum\limits_{k=i+1}^nu_{ik}u_{kj}\\
		&=-x_iu_{1j}+u_{ij}+\sum\limits_{{k=i+1},k\neq j}^nu_{ik}(-x_ku_{1j}+u_{kj})+u_{ij}(-x_ju_{ij}+u_{jj}).
	\end{aligned}$

	Hence, by equation \ref{ZINC3} and equation \ref{ZINC5}, $1<i<j$, we have 
	\begin{equation}
		b_{ij}=-u_{ij}
	\end{equation}

	Therefore, $B=\begin{pmatrix}
		u_{11}+\sum\limits_{i=2}^nu_{1i}u_{i1}w & 0 & 0 & \dots & 0 & 0\\
		b_{21} & -1 & -u_{23} & \dots & -u_{2,n-1} & -u_{2n} \\
		\vdots & \vdots & \vdots & \vdots & \vdots & \vdots \\
		b_{n-1,1} & 0 & 0 & \dots & -1 & -u_{n-1,n}\\
		b_{n1} & 0 & 0 & \dots & 0 & -1
	\end{pmatrix}\vspace{0.2cm}$. 
	
	Take $c_{i1}=b_{i1}+\sum\limits_{j=i+1}^nb_{ij}(b_{j1}+\sum\limits_{k=j+1}b_{jk}b_{k1})$ and  $C=\begin{pmatrix}
		1 & 0 & 0 & \dots & 0 & 0\\
		c_{21} & 1 & 0 & \dots & 0 & 0 \\
		\vdots & \vdots & \vdots & \vdots & \vdots & \vdots \\
		c_{n-1,1} & 0 & 0 & \dots & 1 & 0\\
		c_{n1} & 0 & 0 & \dots & 0 & 1
	\end{pmatrix}\vspace{0.3cm}$. Note that $B$ and $C$ are units (see Lemma \ref{ZINCmatrix}) and so is \\ $BC=\begin{pmatrix}
		b_{11} & 0 & 0 & \dots & 0  & 0\\
		0 & -1 & -u_{23} & \dots  & -u_{2,n-1} & -u_{2n} \\
		\vdots & \vdots & \vdots & \vdots & \vdots & \vdots \\
		0 & 0 & 0 & \dots & -1 & -u_{n-1,n}\\
		0 & 0 & 0 & \dots & 0 & -1
	\end{pmatrix}\vspace{0.2cm}\in U(M_n(R))$. Hence, $b_{11}=u_{11}+\sum\limits_{i=2}^nu_{1i}u_{i1}w\in U(R)$. Substituting $u_{11}=b_{11}-(\sum\limits_{i=2}^nu_{1i}u_{i1})w$ in equation \ref{ZINC1}, we get $f(b_{11}-(\sum\limits_{i=2}^nu_{1i}u_{i1})w)+1=(b_{11}-(\sum\limits_{i=2}^nu_{1i}u_{i1})w)w$. By equation \ref{ZINC2}, we obtain that $fb_{11}+1=b_{11}w-(\sum\limits_{i=2}^nu_{1i}u_{i1})w^2$, that is, $w-b_{11}^{-1}fb_{11}-b_{11}^{-1}=b_{11}^{-1}(\sum\limits_{i=2}^nu_{1i}u_{i1})w^2=b_{11}^{-1}(\sum\limits_{i=2}^ngu_{1i}u_{i1})w^2$ (by equation \ref{ZINC2}, $gu_{1i}=u_{1i}$ for all $i>1$). So, $w-e-v=(1-e)rw^2$, where $e=b_{11}^{-1}fb_{11}$, $v=b_{11}^{-1}$ and $r=(\sum\limits_{i=2}^nvu_{1i}u_{i1})$. Therefore, $R$ is weakly clean. 
\end{proof}

 \begin{corollary}\label{pm}
	If $R$ is a domain such that $M_n(R)$ is ZINC, then for each $w\in R$ there exists $v\in U(R)$ such that $w-v=rw^2$ or $w-v=1$ for some $r\in R$.
\end{corollary}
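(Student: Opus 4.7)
The plan is to invoke Theorem \ref{weaklyc} directly, but to use the slightly sharper form of the conclusion that its proof actually delivers, rather than the weaker statement that $R$ is weakly clean. Namely, for every $w\in R$ the proof produces an idempotent $e=b_{11}^{-1}fb_{11}\in E(R)$, a unit $v=b_{11}^{-1}\in U(R)$, and an element $r\in R$ satisfying
\[
w-e-v=(1-e)rw^{2}.
\]
This is the only input from Theorem \ref{weaklyc} that I will need.

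Next I would use the hypothesis that $R$ is a domain. In a domain the only idempotents are $0$ and $1$, so $e\in\{0,1\}$, and I split into two cases. If $e=0$, then $1-e=1$ and the identity above collapses to
\[
w-v=rw^{2},
\]
which is the first alternative in the statement. If instead $e=1$, then $1-e=0$, so the right-hand side vanishes and the identity becomes $w-1-v=0$, i.e.\ $w-v=1$, which is the second alternative.

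There is no real obstacle here: the corollary is essentially just the specialization of Theorem \ref{weaklyc} to rings with no nontrivial idempotents, and the two cases in the conclusion correspond exactly to the two possible values of the idempotent $e$ produced by that theorem. The only subtlety worth flagging is that one must quote the explicit form $w-e-v=(1-e)rw^{2}$ from inside the proof of Theorem \ref{weaklyc}, rather than the abstract definition of weakly clean, in order to obtain the $rw^{2}$ (and not merely an $rw$) in the first alternative.
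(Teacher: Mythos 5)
Your proof is correct and is exactly the intended derivation: the paper states the corollary without proof immediately after Theorem \ref{weaklyc}, and the only way to obtain the $rw^{2}$ term is, as you note, to use the explicit identity $w-e-v=(1-e)rw^{2}$ established inside that proof and then observe that a domain has only the trivial idempotents $e\in\{0,1\}$, which yield the two alternatives.
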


\begin{corollary}
	For any integer $n>1$, $M_n(\mathbb{Z})$ is not ZINC.
\end{corollary}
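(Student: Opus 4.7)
The plan is to derive a contradiction from Corollary \ref{pm} by choosing a convenient element of $\mathbb{Z}$ that fails both alternatives offered there. Since $\mathbb{Z}$ is a domain, if $M_n(\mathbb{Z})$ were ZINC, then for every $w\in\mathbb{Z}$ there would exist $v\in U(\mathbb{Z})=\{-1,1\}$ and $r\in\mathbb{Z}$ with either $w-v=rw^{2}$ or $w-v=1$.

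I would pick $w=3$ and examine each alternative in turn. The equation $w-v=1$ immediately forces $v=2$, which is not a unit of $\mathbb{Z}$, so this case is impossible. In the remaining case $w-v=rw^{2}$ becomes $3-v=9r$; substituting the only two candidates $v=\pm 1$ yields $9r=2$ or $9r=4$, neither of which has an integer solution. Hence both alternatives of Corollary \ref{pm} fail for $w=3$, giving the desired contradiction.

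There is no real obstacle here: the whole work has been done in Theorem \ref{weaklyc} and its Corollary \ref{pm}, and the present statement is just an arithmetic test case. The only thing to be careful about is to exhibit an explicit $w$ (any $w$ with $w-1\notin U(\mathbb{Z})$ and $w^{2}\nmid(w\pm 1)$ would do, e.g.\ $w=3$ or $w=4$) rather than to try to argue abstractly.
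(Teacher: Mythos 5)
Your proof is correct and follows essentially the same route as the paper: both apply Corollary \ref{pm} with the test element $w=3$ and observe that $3-v\in\{2,4\}$ for $v\in U(\mathbb{Z})=\{\pm 1\}$, so neither alternative ($w-v=1$ or $9\mid w-v$) can hold. Your version merely spells out the two cases that the paper's one-line argument leaves implicit.
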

\begin{proof}
	For any $u\in U(\mathbb{Z})=\{\pm 1\}$, we have 
	$3-u=2$ or $4$. By Corollary \ref{pm}, $M_n(\mathbb{Z})$ is not ZINC. 
\end{proof}
\begin{remark}
	Observe that all commutative rings are ZINC but need not be weakly clean, for instance, $\mathbb{Z}$. Therefore, Theorem \ref{weaklyc} is not true for $n=1$, in general.
\end{remark} 

\begin{lemma} \label{edia} \cite[Corollary 5]{dia}
	Let $A\in E(M_n(R))$. If $A$ is equivalent to a diagonal matrix, then $A$ is similar to a diagonal matrix. 
\end{lemma}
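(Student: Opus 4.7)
The plan is to first conjugate $A$ to a matrix of the form $DM$, and then to diagonalize $DM$ by a nilpotent perturbation. Writing $PAQ = D = \mathrm{diag}(d_1,\dots,d_n)$ for the given equivalence and setting $M := Q^{-1}P^{-1}$, the hypothesis $A^2 = A$ (after multiplying by $P$ on the left and $Q$ on the right) rewrites as $DMD = D$. Since $D$ is diagonal, comparing entries yields the two key relations
\[
d_i m_{ii} d_i = d_i \quad (1 \le i \le n), \qquad d_i m_{ij} d_j = 0 \quad (i \ne j),
\]
so each $e_i := d_i m_{ii}$ is an idempotent of $R$ with $e_i d_i = d_i$. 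A direct computation gives $PAP^{-1} = DM$, and $(DM)^2 = (DMD)M = DM$, so the problem reduces to showing that the idempotent matrix $DM$ is similar to a diagonal matrix.

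Next I would write $DM = E + B$, where $E := \mathrm{diag}(e_1,\dots,e_n)$ is the diagonal part and $B$ is the off-diagonal remainder (so $B_{ij} = d_i m_{ij}$ for $i \ne j$ and $B_{ii} = 0$). Using the relations above I would verify the three identities
\[
EB = B, \qquad BE = 0, \qquad B^2 = 0.
\]
For example, for $i \ne j$, $(BE)_{ij} = d_i m_{ij} \cdot d_j m_{jj} = (d_i m_{ij} d_j) m_{jj} = 0$; and each summand of $(B^2)_{ij}$ has the form $d_i m_{ik} d_k m_{kj}$ with $k \ne i$, hence vanishes by the off-diagonal relation. Setting $T := I + B$, which is invertible with $T^{-1} = I - B$ since $B^2 = 0$, one then computes
\[
(E+B)(I - B) = E - EB + B - B^2 = E \quad\text{and}\quad (I+B)E = E + BE = E,
\]
so $T(E+B)T^{-1} = E$. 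Composing with the first similarity shows that $A$ is similar to the diagonal idempotent matrix $E$.

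The main obstacle is establishing the three identities $EB = B$, $BE = 0$, $B^2 = 0$: each hinges crucially on the off-diagonal relation $d_i m_{ij} d_j = 0$, which encodes the real content of the idempotency hypothesis beyond the mere regularity of each individual $d_i$. Once these identities are in hand, the nilpotent $B$ admits the unipotent conjugator $I + B$ that collapses $DM$ onto its diagonal $E$, and the remainder of the argument is routine.
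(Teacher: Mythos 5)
Your argument is correct. Note, however, that the paper offers no proof of this lemma at all: it is quoted verbatim as Corollary~5 of Song and Guo, \emph{Diagonability of idempotent matrices over noncommutative rings}, so there is nothing internal to compare against. Your proposal is a legitimate self-contained replacement for that citation. The computations all check out over a noncommutative ring: from $PAQ=D$ and $A^2=A$ you correctly get $DMD=D$ with $M=Q^{-1}P^{-1}$, hence the entrywise relations $d_im_{ii}d_i=d_i$ and $d_im_{ij}d_j=0$ for $i\neq j$; the conjugate $PAP^{-1}=DM$ is idempotent; and the three identities $EB=B$, $BE=0$, $B^2=0$ for the splitting $DM=E+B$ follow exactly as you indicate (in $(B^2)_{ij}$ the terms with $k=i$ or $k=j$ vanish because the diagonal of $B$ is zero, and the remaining terms vanish by the off-diagonal relation). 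The unipotent conjugation $(I+B)(E+B)(I-B)=(I+B)E=E$ then finishes the proof, exhibiting $A$ as similar to the diagonal idempotent matrix $\mathrm{diag}(d_1m_{11},\dots,d_nm_{nn})$. This is arguably preferable to the bare citation, since it is short, elementary, and makes visible where idempotency of $A$ (as opposed to mere regularity of the diagonal entries) is used, namely in the off-diagonal relation $d_im_{ij}d_j=0$ that kills $BE$ and $B^2$.
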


When we write a matrix $X\in M_n(R)$  in block form: $X=  \begin{pmatrix}
	(X)_{11} & (X)_{12}\\
	(X)_{21} & (X)_{22}
\end{pmatrix}$, it means $(X)_{11}, (X)_{12},  (X)_{21}, (X)_{22}$ have  orders $1\times 1, 1\times (n-1), (n-1)\times 1$ and $(n-1)\times (n-1)$, respectively.

\begin{lemma}\label{Zblock}
	Let $W$ and $Z$ be matrices over $R$ of order $1\times n-1$ and $n-1\times 1$, $n>1$. If $X\in M_{n}(R)$ and $(X)_{11}=a, (X)_{12}=bW, (X)_{21}=Zc$ and $(X)_{22}=ZdW$, $a,b,c,d\in R$, then  $(X^l)_{11}=a_l, (X^l)_{12}=b_lW, (X^l)_{21}=Zc_l$ and $(X^l)_{22}=Zd_lW$ for all positive integer $l$, and $a_l, b_l, c_l, d_l\in R$.	
\end{lemma}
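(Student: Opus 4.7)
The plan is to prove this by induction on $l$, with the base case $l=1$ being immediate by setting $a_1=a$, $b_1=b$, $c_1=c$, $d_1=d$. The crucial observation that makes the induction work is that the product $WZ$ is a $1\times 1$ matrix over $R$, so it may be identified with a single element $\alpha := WZ \in R$. This is precisely what allows $W$ to always factor out on the right and $Z$ to always factor out on the left of the off-diagonal blocks after multiplication.

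For the inductive step, assume the conclusion holds for some $l\geq 1$, so that
\[
X^l = \begin{pmatrix} a_l & b_l W \\ Z c_l & Z d_l W \end{pmatrix}
\]
with $a_l, b_l, c_l, d_l \in R$. Then I would write $X^{l+1} = X\cdot X^l$ and compute each of the four blocks using ordinary block multiplication. In each block, exactly one product of the form $(bW)(Zc_l)$, $(bW)(Zd_lW)$, $(ZdW)(Zc_l)$, or $(ZdW)(Zd_lW)$ appears, and in every case the central $WZ$ collapses to $\alpha\in R$. After collecting terms, the new blocks take the form
\[
(X^{l+1})_{11}=aa_l+b\alpha c_l,\qquad (X^{l+1})_{12}=(ab_l+b\alpha d_l)W,
\]
\[
(X^{l+1})_{21}=Z(ca_l+d\alpha c_l),\qquad (X^{l+1})_{22}=Z(cb_l+d\alpha d_l)W,
\]
and reading off the coefficients gives the required elements $a_{l+1}, b_{l+1}, c_{l+1}, d_{l+1}\in R$.

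There is no real obstacle here; the only point needing care is the identification of the $1\times 1$ matrix $WZ$ with an element of $R$, which is what guarantees that the factored form is preserved. Once that observation is made, the induction is a routine block-matrix calculation, and the recursions $a_{l+1}=aa_l+b\alpha c_l$, $b_{l+1}=ab_l+b\alpha d_l$, $c_{l+1}=ca_l+d\alpha c_l$, $d_{l+1}=cb_l+d\alpha d_l$ can be recorded explicitly if later results require them.
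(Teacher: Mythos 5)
Your proof is correct and follows the same route the paper takes (the paper simply asserts the block form of $X^l$ "by induction"); you have merely made the induction explicit, and your key observation that $WZ$ is a $1\times 1$ matrix identified with an element $\alpha\in R$ is exactly what makes the block computation close up, even over a noncommutative ring. The explicit recursions you record for $a_{l+1},b_{l+1},c_{l+1},d_{l+1}$ are a harmless bonus.
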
 
\begin{proof}
	Observe that $X=\begin{pmatrix}
		a &  \hspace{0.2cm} bW\\
		Zc & \hspace{0.2cm} ZdW
	\end{pmatrix}$. By induction, it is easy to see that
	$X^l=\begin{pmatrix}
		a_l &  \hspace{0.2cm} b_lW\\
		Zc_l & \hspace{0.2cm} Zd_lW
	\end{pmatrix}$, where $a_l, b_l, c_l, d_l\in R$ for all positive integer $l$. So, $(X^l)_{11}=a_l,~ (X^l)_{12}=b_lW,~ (X^l)_{21}=Zc_l,~ (X^l)_{22}=Zd_lW$.
\end{proof}

\begin{theorem}\label{dsnc}
	If $K$ is a division ring and $n$ an integer $\geq 2$, then $M_n(K)$ is ZINC if and only if $K\cong \mathbb{F}_2$. 
\end{theorem}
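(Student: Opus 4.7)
The plan is to prove both directions separately. The easy direction ($\Leftarrow$) appeals to the fact already used in Example~\ref{infinite pdt}: by \cite[Corollary~7]{ncmr}, $M_n(\mathbb{F}_2)$ is nil clean, so every element is the sum of an idempotent and a nilpotent. In particular, every zero insertive element admits such a decomposition, so $M_n(\mathbb{F}_2)$ is ZINC.

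For the forward direction, fix $w \in K$; by Remark~\ref{ZINC rem} the matrix $W := wE_{11}$ lies in $ZI(M_n(K))$, so the ZINC hypothesis yields $W = F + V$ with $F^2 = F$ and $V$ nilpotent. The strategy is to extract the constraint on $w$ from the first column alone. Let $f := Fe_1$ and let $f_1$ be its first entry. Idempotency of $F$ gives $Ff = f$, from which a direct computation yields $Ve_1 = we_1 - f$ and $Vf = wf_1 e_1 - f$. By induction, $V^k e_1 = e_1 \alpha_k + f\beta_k$ for all $k \geq 0$, where $\alpha_k, \beta_k \in K$ satisfy the recurrence $\alpha_{k+1} = w\alpha_k + wf_1\beta_k$, $\beta_{k+1} = -\alpha_k - \beta_k$ with $\alpha_0 = 1$, $\beta_0 = 0$. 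Nilpotency of $V$ forces $V^m e_1 = 0$ for some $m$, and this single equation is what pins down $w$.

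The argument splits on whether $f$ is a right $K$-multiple of $e_1$. In the degenerate case $f = e_1\gamma$, idempotency forces $\gamma^2 = \gamma$, so $\gamma \in \{0,1\}$, and the recurrence collapses to $V^k e_1 = e_1(w-\gamma)^k$; since $K$ is a division ring, $V^m e_1 = 0$ forces $w = \gamma \in \{0,1\}$. In the nondegenerate case $\{e_1, f\}$ is right-independent, so $V^m e_1 = 0$ is equivalent to the $2 \times 2$ scalar problem $T^m(1,0)^{\mathsf T} = 0$ in $K^2$ for $T = \left(\begin{smallmatrix} w & wf_1 \\ -1 & -1 \end{smallmatrix}\right)$. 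Passing to the basis $\{v_0, v_1\} := \{(1,0)^{\mathsf T}, T(1,0)^{\mathsf T}\}$, which is right-independent because $v_1 = (w,-1)^{\mathsf T}$, I write $Tv_1 = v_0 c_0 + v_1 c_1$ with $c_0 = w(1-f_1)$ and $c_1 = w - 1$, then unwind the resulting scalar recurrence $p_{k+1} = c_0 q_k$, $q_{k+1} = p_k + c_1 q_k$ for the coordinates of $T^k v_0$. Backtracking from $p_m = q_m = 0$ forces $c_0 = 0$ (else one would ultimately derive $q_1 = 0$, contradicting $q_1 = 1$), after which the recurrence degenerates to $q_m = c_1^{m-1}$, and the division-ring property yields $c_1 = 0$, hence $w = 1$. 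Combining both cases gives $K = \{0,1\} \cong \mathbb{F}_2$.

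The main obstacle is the non-commutativity of $K$: I cannot appeal to determinants, traces, or characteristic polynomials, and left versus right scalar actions must be tracked carefully throughout. The device that circumvents this is to project $V^m e_1 = 0$ onto the two-dimensional right $K$-span of $\{e_1, f\}$, reducing the full $n \times n$ nilpotency condition to a $2 \times 2$ scalar problem solvable using only that $K$ has no zero divisors.
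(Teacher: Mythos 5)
Your proof is correct, and it takes a genuinely different and noticeably more elementary route than the paper's. The paper handles the forward direction by invoking Lemma \ref{edia} to conjugate the idempotent $F$ into the diagonal form $\bigl(\begin{smallmatrix} I_l & \mathbf{0}\\ \mathbf{0} & \mathbf{0}\end{smallmatrix}\bigr)$, pinning down $l=1$ via a unit computation, and then running an extended block-matrix analysis (the equations following \eqref{N'} together with Lemma \ref{Zblock}) that splits on whether $WZ=0$ and in each case contradicts the nilpotency of $N'$ by exhibiting a nonvanishing power. You avoid the diagonalization result of Song--Guo entirely: the observation that the right $K$-span of $\{e_1, Fe_1\}$ is invariant under $V=W-F$ reduces the full $n\times n$ nilpotency condition $V^m e_1=0$ to the two-dimensional recurrence governed by $T=\bigl(\begin{smallmatrix} w & wf_1\\ -1 & -1\end{smallmatrix}\bigr)$, which you then kill using only the absence of zero divisors. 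I verified the computations: $Ve_1=we_1-f$ and $Vf=wf_1e_1-f$ are right, the degenerate case $f=e_1\gamma$ correctly forces $\gamma^2=\gamma$ and then $w=\gamma$, and in the nondegenerate case the change of basis to $\{v_0,v_1\}$ gives $c_0=w(1-f_1)$, $c_1=w-1$, after which the backward induction from $p_m=q_m=0$ does force $c_0=0$ (else $q_1=0$, contradicting $q_1=1$) and then $q_m=c_1^{m-1}=0$ forces $w=1$. Your approach buys brevity, independence from the cited diagonalizability lemma and from Lemma \ref{Zblock}, and a cleaner handling of noncommutativity (everything reduces to right-module bookkeeping in a $2$-dimensional invariant subspace); the paper's normal form has the side benefit of making the rank of $F$ explicit ($l=1$), but that information is never actually needed, as your argument shows. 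The backward direction is handled identically in both, via the nil cleanness of $M_n(\mathbb{F}_2)$ from \cite{ncmr}.
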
 

\begin{proof}
	Let $M_n(K)$ be a ZINC ring. Assume, if possible, that $K$ is not isomorphic to $\mathbb{F}_2$. Then, there exists $x \in K\setminus \{0,1\}$. Then,   $P=xE_{11}\in ZI(M_n(\mathbb{K}))$ (by Remark \ref{ZINC rem}). Since $M_n(K)$ is ZINC, $P=F+N$, for some idempotent matrix $F$ and nilpotent matrix $N$ in $M_n(K)$. It is already  well established  that every matrix over a division ring is equivalent to a diagonal matrix. Therefore, by Lemma \ref{edia}, $F$ is similar to a diagonal matrix whose diagonal entries are $0$ or $1$. So, there exists a unit $A=(a_{ij})\in M_n(K)$, such that 
	$A^{-1}FA=\begin{pmatrix}
		I_l & \mathbf{0}\\
		\mathbf{0} & \mathbf{0}
	\end{pmatrix}$, where $I_l$ is the identity matrix in $M_l(K)$, $l\leq n$.  Hence, 
	\begin{equation}\label{N'}
		A^{-1}PA=\begin{pmatrix}
			I_l & \mathbf{0}\\
			\mathbf{0} & \mathbf{0}
		\end{pmatrix} + A^{-1}NA=\begin{pmatrix}
			I_l & \mathbf{0}\\
			\mathbf{0} & \mathbf{0}
		\end{pmatrix} + N^{'}
	\end{equation} where $N'=A^{-1}NA$. Observe that $A^{-1}PA$ is not nilpotent. So, $l\geq 1$. Note that $l\neq n$, otherwise $P=I_n+N$ is invertible, a contradiction as $P$ is not a unit. Therefore, $1\leq l< n$. Observe that $I_n+N'\in U(M_n(K))$ and so $A(I_n+N')\in U\left(M_n(K)\right)$.  With the help of the equation \eqref{N'}, we obtain that
$A(I_n+N')=A\begin{pmatrix}
I_l & \mathbf{0}\\
\mathbf{0} & \mathbf{0}
\end{pmatrix}\vspace{0.2cm}+A\begin{pmatrix}
\mathbf{0} & \mathbf{0}\\
\mathbf{0} & I_{n-l}
\end{pmatrix} +AN'=PA+A\begin{pmatrix}
\mathbf{0} & \mathbf{0}\\
\mathbf{0}  & I_{n-l}
\end{pmatrix}= \begin{pmatrix}
xa_{11} & \dots & xa_{1n}\\
0 & \dots & 0 \\
\vdots & \vdots & \vdots \\
0 &\dots & 0
\end{pmatrix}+ \begin{pmatrix}
0 & \dots& 0 & {a_{1,l+1}} & \dots & a_{1n} \\
0 & \dots & 0 &  a_{2,l+1} & \dots & a_{2n} \\
\vdots & \vdots & \vdots & \vdots  & \vdots & \vdots  \\
0 & \dots & 0 & a_{n,l+1} & \dots & a_{nn}   
\end{pmatrix}\vspace{0.2cm}$\\
$=\begin{pmatrix}
xa_{11} & \dots& xa_{1l} & (1+x){a_{1,l+1}} & \dots & (1+x)a_{1n} \\
0 & \dots & 0 &  a_{2,l+1} & \dots & a_{2n} \\
\vdots & \vdots & \vdots & \vdots  & \vdots & \vdots  \\
0 & \dots & 0 & a_{n,l+1} & \dots & a_{nn}   
\end{pmatrix}\vspace{0.2cm}$. This implies that $l=1$ (otherwise $A(I_n+N')\notin U\left(M_n(K)\right)$) , $a_{11}\neq 0$ and $\begin{pmatrix}
a_{22} & \dots& a_{2n}\\
\vdots & \ddots& \vdots \\
a_{n2}& \dots & a_{nn}
\end{pmatrix}\vspace{0.2cm}=A_1$ (say) is a unit in $M_{n-1}(K)$. Therefore, equation \eqref{N'} becomes
$PA=\begin{pmatrix}
x & \mathbf{0}\\
\mathbf{0} & \mathbf{0}
\end{pmatrix}A=A\begin{pmatrix}
1 & \mathbf{0}\\
\mathbf{0} & \mathbf{0}
\end{pmatrix}+AN'$ and so we can write
\begin{equation}
\begin{pmatrix}
	a_{11}^{-1} & \mathbf{0} \\
	\mathbf{0} & A_1^{-1}
\end{pmatrix} \begin{pmatrix}
	x & \mathbf{0} \\
	\mathbf{0} & \mathbf{0}
\end{pmatrix}\begin{pmatrix}
	a_{11} & \mathbf{0} \\
	\mathbf{0} & A_1
\end{pmatrix}\begin{pmatrix}
	a_{11}^{-1} & \mathbf{0} \\
	\mathbf{0} & A_1^{-1}
\end{pmatrix}A=\begin{pmatrix}
	a_{11}^{-1} & \mathbf{0} \\
	\mathbf{0} & A_1^{-1}
\end{pmatrix}A\begin{pmatrix}
	1 & \mathbf{0} \\
	\mathbf{0} & \mathbf{0}
\end{pmatrix}+\begin{pmatrix}
	a_{11}^{-1} & \mathbf{0} \\
	\mathbf{0} & A_1^{-1}
\end{pmatrix}AN'
\end{equation}
So, we have
\begin{equation}\label{N''}
\begin{pmatrix}
	y & \mathbf{0}\\
	\mathbf{0} & \mathbf{0}
\end{pmatrix}U=U \begin{pmatrix}
	1 & \mathbf{0}\\
	\mathbf{0} & \mathbf{0}
\end{pmatrix} +UN'   
\end{equation} where $y=a_{11}^{-1}xa_{11}$ and $U= \begin{pmatrix}
a_{11}^{-1} & \mathbf{0}\\
\mathbf{0} & {A}_{1}^{-1}
\end{pmatrix}A= \begin{pmatrix}
1 & {W}\\
{Z} & \hspace{0.2cm} {{I}}_{n-1}
\end{pmatrix}$ (say).  Let $U^{-1}= \begin{pmatrix}
b & {W}'\\
{Z}' & {T}
\end{pmatrix}$. We have $UU^{-1}={I}_n=U^{-1}U$. So, we obtain the following equations:

\begin{equation}\label{DM1}
b+{WZ}'=b+{W}'{Z}=1	
\end{equation}

\begin{equation}\label{DM2}
{W}'+{WT}=\textbf{0}=b{W}+{W}	'
\end{equation}

\begin{equation}\label{DM3}
{ZW}'+{T}={I}_{n-1}={Z}'{W}+{T}	
\end{equation}

\begin{equation}\label{DM4}
{Z}b+{Z}'=\textbf{0}={Z}'+{TZ}
\end{equation}

From equations \eqref{DM1}, \eqref{DM2},  \eqref{DM3} and  \eqref{DM4}, we get  $U^{-1}= \begin{pmatrix}
b & -b{W}\\
-{Z}b & \hspace{0.4cm} {I}_{n-1}+{Z}b{W}
\end{pmatrix}$.

From equations \eqref{DM1} and \eqref{DM2}, we get 
\begin{equation}
1=b+{WZ}'=(1-{WZ})b	
\end{equation}	
So, $b\neq 0$ and ${WZ}=1-b^{-1}$.  If ${WZ}=0$, then $b=1$, and $U^{-1}= \begin{pmatrix}
1 & -{W}\\
-{Z} & \hspace{0.4cm} {I}_{n-1}+{ZW}
\end{pmatrix}$. By equation \eqref{N''}, we have\\

$UN'U^{-1}= \begin{pmatrix}
y & \textbf{0}\\
\textbf{0} & \textbf{0}
\end{pmatrix}-U \begin{pmatrix}
1 & \textbf{0}\\
\textbf{0} & \textbf{0}
\end{pmatrix}U^{-1}= \begin{pmatrix}
y-1 & {W}\\
-{Z} & {ZW}
\end{pmatrix}\vspace{0.2cm}$. For all integers $l\geq 1$, we get that $(UN'U^{-1})^{l+1}= \begin{pmatrix}
(y-1)^{l+1} & (y-1)^l{W}\\
-{Z}(y-1)^l & \hspace{0.2cm}-{Z}(y-1)^{l-1}{W}
\end{pmatrix}\vspace{0.2cm}\neq \textbf{0}$ as $y-1\neq 0$. This is a contradiction as $UN'U^{-1}$ is nilpotent.  Therefore, we can assume that ${WZ}\neq 0$. Now, by equation \eqref{N''}, we have 
\begin{equation}
\begin{pmatrix}
	1 & -{W}\\
	\mathbf{0} & \hspace{0.2cm}{I}_{n-1}
\end{pmatrix} \begin{pmatrix}
	y & \mathbf{0}\\
	\mathbf{0} & \mathbf{0}
\end{pmatrix}  \begin{pmatrix}
	1 & {W}\\
	\mathbf{0} & \hspace{0.2cm} {I}_{n-1}
\end{pmatrix}\begin{pmatrix}
	1 & -{W}\\
	\mathbf{0} & \hspace{0.2cm}{I}_{n-1}
\end{pmatrix}U= \begin{pmatrix}
	1 & -{W}\\
	\mathbf{0} & \hspace{0.2cm} {I}_{n-1}
\end{pmatrix}U \begin{pmatrix}
	1 & \mathbf{0}\\
	\mathbf{0} & \mathbf{0}
\end{pmatrix}+ \begin{pmatrix}
	1 & -{W}\\
	\mathbf{0} & \hspace{0.2cm}{I}_{n-1}
\end{pmatrix}UN'
\end{equation}
So,
\begin{equation}\label{N'''}
\begin{pmatrix}
	y & y{W}\\
	\mathbf{0} & \textbf{0}
\end{pmatrix}S=S  \begin{pmatrix}
	1 & \mathbf{0}\\
	\mathbf{0} & \mathbf{0}
\end{pmatrix}+SN'  
\end{equation}
where $S=  \begin{pmatrix}
1 & -{W}\\
\mathbf{0} & \hspace{0.2cm} {I}_{n-1}
\end{pmatrix}U=  \begin{pmatrix}
b^{-1} & \mathbf{0}\\
{Z} & \hspace{0.2cm}{I}_{n-1}
\end{pmatrix}$ and $S^{-1}=  \begin{pmatrix}
b & \mathbf{0}\\
-{Z}b & \hspace{0.2cm}{I}_{n-1}
\end{pmatrix}$. Let $Q=SN'S^{-1}$.  From equation \eqref{N'''}, we get 
\begin{equation}\label{Na}
Q=  \begin{pmatrix}
	y & \hspace{0.2cm}y{W}\\
	\mathbf{0} & \mathbf{0}
\end{pmatrix}-S  \begin{pmatrix}
	1 & \mathbf{0}\\
	\mathbf{0} & \mathbf{0}
\end{pmatrix}S^{-1}= \begin{pmatrix}
	y-1 & \hspace{0.2cm} y{W}\\
	-{Z}b & \textbf{0}
\end{pmatrix}
\end{equation}
From equation \eqref{Na} and Lemma \eqref{Zblock}, we obtain
\begin{equation}\label{N;'}
(Q^l)_{12}=b_l{W}, \hspace{0.2 cm}(Q^l)_{21}={Z}c_l, \hspace{0.2 cm} (Q^l)_{22}={Z}a_l{W} 
\end{equation}  for some $ a_l, b_l ,c_l\in K$.
Since $Q$ is nilpotent and $(Q)_{21}=-{Z}b\neq \textbf{0}$, there a positive integer $i$ such that $(Q^{i+1})_{21}=\textbf{0}$ but $(Q^i)_{21}\neq \textbf{0}$. Therefore, by equations \eqref{Na}, and  \eqref{N;'}
\begin{equation*}	
Q^{i+1}=\begin{pmatrix}
	(Q^i)_{11} & \hspace{0.3 cm} (Q^i)_{12}\\
	(Q^i)_{21} & \hspace{0.3 cm} (Q^i)_{22}
\end{pmatrix}\begin{pmatrix}
	(Q)_{11} & \hspace{0.3 cm}  y{W}\\
	(Q)_{21} & \hspace{0.3 cm} \textbf{0}
\end{pmatrix}=\begin{pmatrix}
	(Q^{i+1})_{11} & \hspace{0.3 cm} (Q^{i+1})_{12}\\
	\textbf{0} & \hspace{0.3 cm} {Z}c_iy{W}
\end{pmatrix}
\end{equation*}
where $c_iy\neq 0$. Now, we see that if $t\in K$, ${Z}t{W}=\textbf{0}$, then $t=0$. So, if $t\neq 0$, $({Z}t{W})^2={Z}(t{WZ}t){W}\neq \textbf{0}$ (since $tWZt\neq 0$) and in the same way $({Z}t{W})^l\neq \textbf{0}$ for all $l\geq 0$. So, ${Z}c_iy{W}\in M_{n-1}(K)$ is not nilpotent, that is, $Q^{l+1}$ is not nilpotent, a contradiction. Hence, $x=0$ or $1$, that is, $K\cong \mathbb{F}_2$. On the other hand, the converse follows from \cite[Theorem 3]{ncmr}.
\end{proof}

\begin{remark}
	Any division ring is weakly clean. If $R$ is a division ring with $|R|>2$  then $M_n(R)$ is not ZINC (by Theorem \ref{dsnc}), where  $|R|$ denotes  the cardinality of $R$. Therefore, the converse of Theorem \ref{weaklyc} is not true.
	
\end{remark}

It is evident that any quotient ring of a nil-clean ring is nil-clean. However, there exists a ZINC ring whose quotient ring is not  ZINC ring.
\begin{example}\label{frac}
	Let $R$ be the localization of the ring $\mathbb{Z}$ at  $3\mathbb{Z}$ and $S$ the quaternions over $R$, that is, a free $R$-module with basis $1,~i,~j,~k$ and satisfy $i^2=j^2=k^2=-1,~ij=k=-ji$. Clearly, $S$ is a non-commutative domain. So, $S$ is a ZINC ring.  By \cite[Example 3]{yh},  $J(S)=3S$, and $S/3S\cong M_2(\mathbb{Z}_3)$ which in not ZINC by Theorem \ref{dsnc}. Therefore, $R/J(R)$ is not ZINC. 
\end{example}

A \textit{Morita context} (\cite{morita}) is a $4$-tuple $\begin{pmatrix}
	R_1 & M\\
	P & R_2
\end{pmatrix}$, where $R_1$, $R_2$ are rings, $M$ is $(R_1,R_2)$-bimodule and $P$ is $(R_2,R_1)$-bimodule, and there exists a context product $M\times P\rightarrow R_1$ and $P\times M\rightarrow R_2$ written multiplicatively as $(m,p)\mapsto mp$ and $(p,m)\mapsto pm$. Clearly, $\begin{pmatrix}
	R_1 & M\\
	P & R_2
\end{pmatrix}$ is an associative ring with the usual matrix operations.\\
A Mortia context $\begin{pmatrix}
	R_1 & M\\
	P & R_2
\end{pmatrix}$ is said to be trivial if the context products are trivial, that is, $MP=0$ and $PM=0$.

\begin{theorem}
	Let $R=\begin{pmatrix}
		A & M\\
		N & B
	\end{pmatrix}$ be a trivial Morita context. If $A$ and $B$ are ZINC, then $R$ is ZINC.
	
\end{theorem}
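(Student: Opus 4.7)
The plan is to reduce the statement to the two previous tools we already have available, namely Theorem \ref{nil} (a ring is ZINC whenever it is so modulo a nil ideal) and Proposition \ref{prod} (a finite product of ZINC rings is ZINC). The key observation is that the off-diagonal part of $R$ forms a square-zero ideal whose quotient is exactly the product $A\times B$.

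Concretely, I would first set
\[
I = \begin{pmatrix} 0 & M \\ N & 0 \end{pmatrix}
\]
and verify that $I$ is a two-sided ideal of $R$, which is immediate from the usual matrix multiplication rules together with the bimodule structures on $M$ and $N$. Next, using the triviality of the Morita context (so $MN=0$ and $NM=0$), I would compute
\[
I^2 = \begin{pmatrix} MN & 0 \\ 0 & NM \end{pmatrix} = \begin{pmatrix} 0 & 0 \\ 0 & 0 \end{pmatrix},
\]
which shows that every element of $I$ squares to zero. In particular $I$ is a nil ideal of $R$.

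Finally, I would identify the quotient $R/I$ with the direct product $A\times B$ via the natural projection sending $\begin{pmatrix} a & m \\ n & b \end{pmatrix}$ to $(a,b)$; the kernel is exactly $I$, and the assignment is clearly a ring homomorphism because the cross terms $mn$ and $nm$ vanish on the diagonal blocks. Since $A$ and $B$ are ZINC by hypothesis, Proposition \ref{prod} gives that $A\times B$, and hence $R/I$, is ZINC; then Theorem \ref{nil} lifts this property through the nil ideal $I$ to conclude that $R$ itself is ZINC.

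There is no real obstacle here: the argument is essentially a bookkeeping exercise, and the only point that requires care is checking that $I^2=0$ uses precisely the triviality of the Morita context (without which $I$ need not be nil), and that the quotient map to $A\times B$ is a genuine ring isomorphism (as opposed to merely an additive one), which again uses $MN=0=NM$.
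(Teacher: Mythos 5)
Your proposal is correct, and it takes a genuinely different (and cleaner) route than the paper. The paper argues directly on elements: it takes $S=PXQ\in ZI(R)$ with $PQ=0$, uses the triviality of the context products to deduce that the diagonal entries $a_0xa_1$ and $b_0yb_1$ lie in $ZI(A)$ and $ZI(B)$ respectively, decomposes each as idempotent plus nilpotent, and then absorbs the off-diagonal entries of $S$ into the nilpotent summand (implicitly using $MN=0=NM$ again to see that $\begin{pmatrix} n_x & m\\ n & n_y\end{pmatrix}$ is nilpotent). Your argument instead observes that $I=\begin{pmatrix} 0 & M\\ N & 0\end{pmatrix}$ is a square-zero ideal with $R/I\cong A\times B$, and then invokes Proposition \ref{prod} and Theorem \ref{nil}; all the verifications you flag (that $I$ is an ideal, that $I^2=0$, and that the projection to $A\times B$ is multiplicative) do indeed reduce to $MN=0=NM$, so there is no gap. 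Your reduction exactly parallels the paper's own proof that $T_n(R)$ is ZINC when $R$ is (strictly upper triangular matrices form a nil ideal with quotient $R\times\cdots\times R$), so it fits naturally into the paper's toolkit and avoids the bookkeeping of the explicit decomposition; what the paper's computation buys in exchange is an explicit description of the idempotent and nilpotent parts of a zero insertive element of $R$, which your abstract argument does not exhibit.
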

\begin{proof}
  Suppose $A$ and $B$ are  ZINC rings. Let $S\in ZI(R)$. Then, $S=PXQ$, where  $P=\begin{pmatrix}
			a_0 & m_0 \\
			n_0 & b_0
		\end{pmatrix} $, $Q=\begin{pmatrix}
			a_1 & m_1 \\
			n_1 & b_1
		\end{pmatrix}, X=\begin{pmatrix}
		x & m_2 \\
		n_2 & y
	\end{pmatrix} \in R$ with $PQ=0.$ Since $PQ=0$, $a_0a_1=0$ and $b_0b_1=0$. Observe that $a_0xa_1\in ZI(A)$ and $b_0yb_1\in ZI(B)$. Since $A$ and $B$ are ZINC rings, there exist $e_x\in E(A),~e_y\in E(B)$, $n_x\in N(A),~m_y\in N(B)$ such that $a_0xa_1=e_x+n_x$\vspace{0.2cm} and $b_0yb_1=e_y+n_y$.
		Now, $PXQ=\begin{pmatrix}
			e_x & 0 \\
			0 & e_y
		\end{pmatrix}\vspace{0.2cm} +\begin{pmatrix}
			n_x & m \\
			n & n_y
		\end{pmatrix}$ for some $m\in M$ and $n\in N$. Observe that $\begin{pmatrix}
			e_x & 0 \\
			0 & e_y
		\end{pmatrix}\in E(R)$, $\begin{pmatrix}
			n_x & m \\
			n & n_y
		\end{pmatrix}\in N(R)$. Therefore, $R$ is ZINC.
			
\end{proof}

\begin{proposition}
	The following are equivalent:
	\begin{enumerate}
		\item $R$ is ZINC. 
		\item $Re$ is ZINC for all central idempetent $e$.
		\item There exists a central idempotent $e$ of $R$ such that both $Re$ and $R(1-e)$ are ZINC.
	\end{enumerate}
\end{proposition}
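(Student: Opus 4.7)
The plan is to reduce the whole statement to Proposition \ref{prod} via the standard Peirce decomposition associated to a central idempotent. The key observation is that for any central idempotent $e \in R$, the map $\varphi : R \to Re \times R(1-e)$ given by $r \mapsto (re,\,r(1-e))$ is a ring isomorphism, where $Re$ is viewed as a ring with multiplicative identity $e$ (and likewise for $R(1-e)$). Once this decomposition is in place, the three implications become immediate.

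For (1) $\Rightarrow$ (2), I would take an arbitrary central idempotent $e$ and use $\varphi$ to transfer the ZINC property from $R$ to the product $Re \times R(1-e)$, and then invoke Proposition \ref{prod} (in the ``only if'' direction) to conclude that each factor, and in particular $Re$, is ZINC. The implication (2) $\Rightarrow$ (3) is immediate: simply take $e = 1$, so $Re = R$ is ZINC by hypothesis and $R(1-e)$ is the zero ring, which is trivially ZINC since $ZI(0) = \{0\} = E(0) + N(0)$. (Alternatively, any nontrivial central idempotent will do.) For (3) $\Rightarrow$ (1), I would again invoke $\varphi$ together with Proposition \ref{prod} (now in the ``if'' direction): since both $Re$ and $R(1-e)$ are ZINC, so is their direct product, hence so is $R$.

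There is no genuine obstacle here; the only thing to verify carefully is the isomorphism $\varphi$ and the fact that a corner $Re$ at a central idempotent inherits a ring structure with identity $e$, after which the result is a direct application of Proposition \ref{prod}.
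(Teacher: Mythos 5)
Your proof is correct and rests on the same idea as the paper's, namely the Peirce decomposition of $R$ at the central idempotent $e$; the paper just carries out the element-wise version directly (writing $x = xe + x(1-e)$ and noting $xe \in ZI(Re)$, $x(1-e) \in ZI(R(1-e))$) rather than routing through the isomorphism $R \cong Re \times R(1-e)$ and Proposition \ref{prod}. Both arguments are sound, and yours is a clean repackaging of the same decomposition.
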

\begin{proof}
	It is easy to observe that $(1)\Rightarrow (2)\Rightarrow (3)$.
	Now, for $(3)\Rightarrow (1)$, let $x\in ZI(R)$. Observe that $x=xe+x(1-e)$. As $e$ is central, $xe\in ZI(Re)$ and $x(1-e)\in ZI(R(1-e))$. By hypothesis, $x=(f+p)+(g+q)$, where $f\in E(Re), p\in N(Re), g\in E(R(1-e)), q\in N(R(1-e))$. Hence, $x=(f+g)+(p+q)\in E(R) +N(R)$. 
\end{proof}

\begin{proposition}\label{sj} 
	Let $R$ be a  ZINC ring.  If $ab=0$, then $aJ(R)b\subseteq N(R)$.
	
\end{proposition}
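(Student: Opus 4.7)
The plan is to fix $j \in J(R)$ and $a,b \in R$ with $ab=0$, and prove $ajb \in N(R)$; this suffices by arbitrariness of $j$. The crucial observation is that the element $ajb$ sits in two distinguished subsets simultaneously: it lies in $ZI(R)$ directly from the definition (with the given $a, b$ and middle term $r=j$), and it lies in $J(R)$ because $J(R)$ is a two-sided ideal of $R$. The first membership will supply a nil-clean decomposition via the ZINC hypothesis, while the second will force that decomposition to collapse to its nilpotent part.

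Concretely, I would carry out the argument in three steps. First, invoke the ZINC property on $ajb \in ZI(R)$ to write
\[
ajb = e + n, \quad e \in E(R), \ n \in N(R),
\]
so that the whole claim reduces to proving $e = 0$. Second, pass to the quotient $R/J(R)$: since $ajb \in J(R)$, the displayed equation becomes $\bar{e} + \bar{n} = \bar{0}$, i.e.\ $\bar{e} = -\bar{n}$ in $R/J(R)$. But $\bar{e}$ is an idempotent and $-\bar{n}$ is nilpotent (nilpotence of $n$ in $R$ descends), so $\bar{e}$ is at once idempotent and nilpotent, which forces $\bar{e} = 0$. Third, conclude $e \in J(R)$; since the Jacobson radical contains no nonzero idempotent, $e=0$, and hence $ajb = n \in N(R)$, as required.

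I do not anticipate any genuine obstacle: the argument is essentially immediate once one has noticed that $ajb$ belongs to $ZI(R)$ \emph{and} to $J(R)$ at the same time. If one preferred to avoid the passage to $R/J(R)$, one could alternatively right-multiply $e = ajb - n$ by $e$ to get $(1+n)e = (ajb)e$, then invert the unit $1+n$ to obtain $e = ke$ with $k = (1+n)^{-1}(ajb) \in J(R)$, and finish by inverting $1-k \in U(R)$ to conclude $e=0$; both routes use the same two ingredients and are of comparable length.
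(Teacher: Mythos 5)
Your proof is correct and follows essentially the same route as the paper: both hinge on observing that $ajb$ lies in $ZI(R)\cap J(R)$, invoking the ZINC decomposition $ajb=e+n$, and forcing $e\in J(R)$ so that $e=0$. The paper extracts $e\in J(R)$ by expanding $(ajb-e)^m=0$ rather than by passing to $R/J(R)$, but this is only a cosmetic difference in the middle step.
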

\begin{proof}
	Let $a,b\in R$ be such that $ab=0$. Since $R$ is ZINC, $aJ(R)b\subseteq ZI(R) \subseteq E(R)+N(R)$. So, for any $j\in J(R)$,  there exist $e\in E(R)$, $h\in N(R)$ such that $ajb=e+h$.  Then $(ajb-e)^{m}=0$ for some positive integer $m$. This implies that $j_1-e=0$ for some $j_1\in J(R)$.  So, $e\in J(R)$, that is, $e=0$.
\end{proof}

If $R=M_2(\mathbb{R})$, then $AJ(R)B=0$ for any $A,B\in R$. By Theorem \ref{dsnc}, the converse of Proposition \ref{sj} is not true.  

Now, in the following example we observe that there exists a ZINC ring whose polynomial extension and power series extension are not ZINC.
\begin{example}
	
	\begin{enumerate}
		
		\item Take $R=M_2(\mathbb{Z}_2)$, which is ZINC by Theorem \ref{dsnc}.  Observe that $f(x)=E_{12}$, $g(x)=E_{11}x\in R[x]$ and $f(x)g(x)=0$. So, $f(x)E_{21}g(x)=g(x)\in ZI(R[x])$. Suppose $g(x)=e(x)+n(x)$, where $e(x)=e_0+e_1x+\dots +e_kx^k\in E(R[x]), n(x)=n_0+n_1x+\dots +n_{k_1}x^{k_1}\in N(R[x])$. Clearly, $e_0=-n_0\in E(R)\cap N(R)$, that is, $e_0=0$. If $e(x)\neq 0$, then $e(x)=x^mh(x)$, where $m\geq 1$ and $h(x)=h_0+h_1x+h_2x^2+\dots +h_{k_2}x^{k_2} \in R[x]$ and $h_0\neq 0$. Comparing the coefficient of $x^m$ in $e(x)^2=e(x)$ gives $h_0=0$, a contradiction. Thus $g(x)\in N(R[X])$, a contradiction. Therefore, $R[x]$ is not ZINC.
		
		\item  Take $R=M_2(\mathbb{Z}_2)$ which is ZINC (by Theorem \ref{dsnc}).  It is well known that $J(R[[x]])=J(R)+\langle x \rangle$. Observe that $E_{11}E_{21}=0$ and $E_{11}(E_{12}x)E_{21}=E_{11}x\notin N(R[[x]])$. By Proposition \ref{sj}, $R[[x]]$ is not ZINC.  
	\end{enumerate}
	
\end{example}


\section*{Competing interests} 
The authors have no conflict of interest to declare that are
relevant to the content of this study.

\end{document}